\newtheorem{theorem}{Theorem}[section]
\newtheorem{corollary}[theorem]{Corollary}
\newtheorem{lemma}[theorem]{Lemma}
\newtheorem{proposition}[theorem]{Proposition}
\newtheorem{definition-theorem}[theorem]{Definition-Theorem}
\newtheorem{definition-proposition}[theorem]{Definition-Proposition}
\newtheorem*{thm}{Theorem}
\theoremstyle{definition}
\newtheorem{definition}[theorem]{Definition}
\newcommand{\A}{{\mathcal A}}
\newcommand{\B}{{\mathcal B}}
\newcommand{\C}{{\mathcal C}}
\newcommand{\ZZ}{\mathbb{Z}}
\newcommand{\NN}{\mathbb{N}}
\newcommand{\inj}{\mathsf{inj}\hspace{.01in}}
\renewcommand{\mod}{\mathsf{mod}\hspace{.01in}}
\newcommand{\Mod}{\mathsf{Mod}\hspace{.01in}}
\newcommand{\proj}{\mathsf{proj}\hspace{.01in}}
\newcommand{\psmod}{\underline{\mod}}
\newcommand{\ismod}{\overline{\mod}}
\newcommand{\Cok}{\operatorname{Cok}\nolimits}
\newcommand{\Ext}{\operatorname{Ext}\nolimits}
\newcommand{\Hom}{\operatorname{Hom}\nolimits}
\renewcommand{\Im}{\operatorname{Im}\nolimits}
\newcommand{\Ker}{\operatorname{Ker}\nolimits}
\newcommand{\op}{\operatorname{op}\nolimits}
\newcommand{\Tr}{\operatorname{Tr}\nolimits}
\newcommand{\identity}{\operatorname{id}\nolimits}
\begin{document}
\title[Recollements for dualizing $k$-varieties]{Recollements for dualizing $k$-varieties and Auslander's formulas}
\author[Yasuaki Ogawa]{Yasuaki Ogawa}
\keywords{recollements; dualizing $k$-varieties; Auslander-Bridger sequences; Auslander-Reiten theory}
\date{\today}
\subjclass[2010]{18A25(primary); 16G70(secondary)}
\address{Graduate School of Mathematics, Nagoya University, Furo-cho, Chikusa-ku, Nagoya, 464-8602, Japan}
\email{m11019b@math.nagoya-u.ac.jp}
\begin{abstract}
Given the pair of a dualizing $k$-variety and its functorially finite subcategory,
we show that there exists a recollement consisting of their functor categories of finitely presented objects.
We provide several applications for Auslander's formulas:
The first one realizes a module category as a Serre quotient of a suitable functor category.
The second one shows a close connection between Auslander-Bridger sequences and recollements.
The third one gives a new proof of the higher defect formula which includes the higher Auslander-Reiten duality as a special case.
\end{abstract}
\maketitle
%%%%%%%%%%%%%%%%%%%%%%%%%%%%%%%%%%%%%%%%%%%%%%%%%%%%%%%%%%%%%%%%%%%%%%%%%
\section*{Introduction}
%%%%%%%%%%%%%%%%%%%%%%%%%%%%%%%%%%%%%%%%%%%%%%%%%%%%%%%%%%%%%%%%%%%%%%%%%
The notion of recollements introduced in \cite{BBD} provides an effective tool for categorical study of algebras.
A recollement of abelian categories is a special case of Serre quotients where both
the inclusion and the quotient functor admit left and right adjoints.
Such a recollement situation is denoted throughout the paper by a diagram of the form below:
$$\xymatrix@C=1.2cm{\B\ar[r]^-{}
&\A\ar[r]^-{}\ar@/^1.2pc/[l]^-{}\ar_-{}@/_1.2pc/[l]
&\C .\ar@/^1.2pc/[l]^{}\ar@/_1.2pc/[l]_{}}$$
It provides a tool for deconstructing the middle category $\A$ into smaller ones $\B$ and $\C$.
There are a lot of recent work on recollements of abelian categories \cite{FP, Psa, PV}.
One of the most studied example is as follows.
Let $A$ be an associative ring (with unit) and $e$ its idempotent. Then there exists a recollement:
\begin{equation}\label{idempotent_recollement}
\xymatrix@C=1.2cm{\Mod A/AeA\ar[r]^-{{}}
&\Mod A\ar[r]^-{}\ar@/^1.2pc/[l]^-{}\ar_-{}@/_1.2pc/[l]
&\Mod eAe .\ar@/^1.2pc/[l]^{}\ar@/_1.2pc/[l]_{}}
\end{equation}
If $A$ is noetherian, it restricts to a recollements consisting of the
categories of finitely generated modules.
In fact, recollements of this type appeared in many contexts in representation theory, e.g. \cite{Kra2, CS, Eir}.

Our aim is to extend this recollement to functor categories over dualizing $k$-varieties.
A dualizing $k$-variety can be considered as an analog of the category of finitely generated projective
modules over a finite dimensional algebra, but with possibly infinitely many indecomposable
objects up to isomorphism \cite{AR}.
It is a Krull-Schmidt $\Hom$-finite $k$-linear category $\A$ where the standard $k$-duality $\Hom_k(-,k)$ induces the duality between $\mod\A$ and $\mod(\A^{\op})$.

\begin{thm}[Theorem \ref{thm:main}]
Let $(\A,\B)$ be the pair of a dualizing $k$-variety $\A$ and its functorially finite subcategory $\B$.
Then we have the following recollement:
$$\xymatrix@C=1.2cm{\mod(\A/[\B])\ar[r]^-{{}}
&\mod\A\ar[r]^-{}\ar@/^1.2pc/[l]^-{}\ar_-{}@/_1.2pc/[l]
&\mod\B .\ar@/^1.2pc/[l]^{}\ar@/_1.2pc/[l]_{}}$$
\end{thm}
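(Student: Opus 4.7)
The plan is to construct the six functors of a recollement explicitly and verify the axioms, leveraging both the functorial finiteness of $\B$ in $\A$ and the $k$-duality afforded by the dualizing structure of $\A$.

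I would begin with the two fully faithful functors at either end of the recollement. The canonical quotient $\pi\colon\A\to\A/[\B]$ gives, by precomposition, a fully faithful embedding $i_*\colon\mod(\A/[\B])\hookrightarrow\mod\A$ whose essential image consists of those $F\in\mod\A$ with $F|_\B=0$: indeed, any morphism of $\A$ factoring through an object of $\B$ is annihilated by such an $F$, and so $F$ descends to $\A/[\B]$. Symmetrically, the inclusion $\B\hookrightarrow\A$ yields the restriction $j^*=\res\colon\mod\A\to\mod\B$. Tautologically $j^*i_*=0$ and $\Im(i_*)=\Ker(j^*)$, which will give the kernel condition of the recollement.

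Next I would produce the adjoints of $j^*$. The left adjoint $j_!\colon\mod\B\to\mod\A$ is the left Kan extension along the inclusion; it sends a representable $\B(-,B)$ to $\A(-,B)$ and preserves finite presentation since it is right exact and $\mod\B$ is generated under cokernels by representables. The right adjoint $j_*\colon\mod\B\to\mod\A$ is produced via duality: the dualizing structure supplies $D=\Hom_k(-,k)\colon\mod\A\to(\mod(\A^{\op}))^{\op}$ swapping left and right adjoints, and setting $j_*:=D\circ j_!^{\op}\circ D$ gives a right adjoint preserving finite presentation because its opposite does. The adjoints of $i_*$ then arise as $i^*F:=\Cok(j_!j^*F\to F)$ and $i^!F:=\Ker(F\to j_*j^*F)$, formed from the counit of $j_!\dashv j^*$ and the unit of $j^*\dashv j_*$; both land in $\Ker(j^*)=\Im(i_*)$ thanks to the identifications $j^*j_!\simeq\identity\simeq j^*j_*$ that accompany the full faithfulness of $j_!$ and $j_*$, and they inherit finite presentation from $j_!$ and $j_*$.

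The main obstacle is verifying that $j_!$ and $j_*$ are well-defined as functors landing in $\mod\A$, i.e.\ that the relevant Kan extensions preserve finite presentation. For $j_!$ this is immediate from right exactness, but for $j_*$ one would in principle need to compute the pointwise Kan extension using right $\B$-approximations of objects of $\A$, available thanks to functorial finiteness; the cleanest route, however, is the duality argument above, which is precisely where the dualizing hypothesis is genuinely used. Once these six functors exist, the remaining recollement axioms—the adjunctions, full faithfulness of $j_!$ and $j_*$, and the exact triangles relating units and counits—follow from the Yoneda lemma and standard formal manipulations.
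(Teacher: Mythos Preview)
Your plan is correct and matches the paper's strategy in its essential point: the right adjoint $j_*$ of the restriction is obtained by conjugating the left Kan extension on the opposite side by the duality $D$, which is exactly how the paper proves its Proposition~2.3 and is where the dualizing hypothesis does the real work.

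There are two places where your sketch and the paper diverge slightly. First, you take for granted that $j^*=\res$ and $i_*=(-)\circ\pi$ send finitely presented functors to finitely presented functors; neither is automatic. The paper isolates this as Lemma~2.2: contravariant finiteness of $\B$ is precisely what is needed for $\A(-,a)|_\B$ to be finitely presented (one uses a right $\B$-approximation together with weak kernels in $\A$), and the same condition forces $(\A/[\B])(-,\bar a)$ to lie in $\mod\A$. You should flag this step explicitly. Second, for the adjoints on the left half you build $i^*$ and $i^!$ as $\Cok(j_!j^*\Rightarrow\identity)$ and $\Ker(\identity\Rightarrow j_*j^*)$, whereas the paper instead shows that $\A/[\B]$ is again a dualizing $k$-variety (Lemma~2.4) and then repeats the tensor/Hom-plus-duality argument verbatim (Proposition~2.5). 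Your route is shorter and avoids proving anything about $\A/[\B]$; the paper's route has the side benefit of identifying $e_\lambda$ and $e_\rho$ concretely as $-\otimes_\A(\A/[\B])$ and $\Hom_\A(\A/[\B],-)$. Both are standard and either completes the proof.
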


In Section 3, we show that in the functor category of a suitable dualizing $k$-variety, Auslander-Bridger sequences are nothing other than right-defining exact sequences of a recollement (Theorem \ref{thm:AB}).

In Section 4, we approach to higher Auslander-Reiten theory from a viewpoint of dualizing $k$-varieties. Auslander-Reiten theory is a fundamental tool for studying representation theory of Artin algebras, see \cite{ARS, ASS}.
In the late 2000s, Higher Auslander-Reiten theory was introduced by Iyama in \cite{Iya2,Iya}.
In this section, we give a higher analog of Auslander's defect formula for an $n$-cluster tilting subcategory $\B$ of $\mod \A$,
where $\A$ is a dualizing $k$-variety
(Theorem \ref{higher defect formula}).
As an application, we give an equivalence
$$\sigma_n :\underline{\B}\xrightarrow{\sim}\overline{\B}$$
and a bifunctorial isomorphism
\begin{equation}\label{0.1}
\underline{\B}(\sigma_n^-y,x)\cong D\Ext^n_\A(x,y)\cong\overline{\B}(y,\sigma_nx).
\end{equation}
In particular, $\sigma_n$ coincides with the $n$-Auslander-Reiten translation $\tau_n$
and (\ref{0.1}) gives an $n$-Auslander-Reiten duality (Theorem \ref{thm:AR-translation}).

A similar approach to higher Auslander-Reiten theory was given by Jasso and Kvamme \cite{JK} independently, but our approach is slightly different since we do not use an explicit form of $\tau_n$.
%%%%%%%%%%%%%%%%%%%%%%%%%%%%%%%%%%%%%%%%%%%%%%%%%%%%%%%%%%%%%%%%%%%%%%%%%
\subsection*{Notation and convention}
%%%%%%%%%%%%%%%%%%%%%%%%%%%%%%%%%%%%%%%%%%%%%%%%%%%%%%%%%%%%%%%%%%%%%%%%%
Throughout this paper we fix a commutative field $k$.
The symbol $A$ always denotes a finite dimensional algebra over the field $k$.
The category of finite dimensional right $A$-modules and its full subcategory of projective  (resp. injective) $A$-modules will be denoted
by $\mod A$ and ${\proj}A$ (resp. ${\inj}A$), respectively.
The projectively (resp. injectively) stable category of $\mod A$ will be denoted by $\psmod A$ (resp. $\ismod A$).

The symbols $\A, \B$ and $\C$ always denote an additive category, and the set of morphisms $a\rightarrow b$ in $\A$ is denoted by $\A(a,b)$.
We consider only additive functors between additive categories;
that is functors $F$ which satisfy $F(f +g) = F(f)+F(g)$ whenever $f +g$ is defined.
For a given category $\A$, we denote its opposite category by $\A^{\op}$.
For a functor $F:\A\rightarrow \C$, its \textit{image} and \textit{kernel} are defined as the full subcategories of $\A$
$$\Im F:=\{y\in\C\mid {^\exists x\in\A},\  Fx\cong y\}\ \ \textnormal{and}\ \  \Ker F:=\{x\in\A\mid F(x)=0\},$$
respectively.
Let $\B$ be a subcategory of $\A$.
We denote by $\A/[\B]$ the ideal quotient category of $\A$ modulo the (two-sided) ideal $[\B]$ in $\A$ consisting of all
morphisms having a factorization through an object in $\B$.
If there exists a fully faithful functor $\B\hookrightarrow\A$,
we often regard $\B$ as a full subcategory of $\A$.

In the case that given categories are $k$-linear; that is for all $x,y\in\A$ its morphism-space $\A(x,y)$ is a $k$-module and the composition
$\A(y,z)\times\A(x,y)\rightarrow\A(x,z)$
is $k$-bilinear,
we consider only additive $k$-linear functors, that is, they give a $k$-linear maps between morphism-spaces. 

%%%%%%%%%%%%%%%%%%%%%%%%%%%%%%%%%%%%%%%%%%%%%%%%%%%%%%%%%%%%%%%%%%%%%%%%%
\subsection*{Acknowledgements}
%%%%%%%%%%%%%%%%%%%%%%%%%%%%%%%%%%%%%%%%%%%%%%%%%%%%%%%%%%%%%%%%%%%%%%%%%
First and foremost, I would like to express my gratitude to my supervisors Kiriko Kato and Osamu Iyama, who gave me so many helpful suggestions and discussions.
I am also grateful to Takahide Adachi for his valuable comments and advice.
%%%%%%%%%%%%%%%%%%%%%%%%%%%%%%%%%%%%%%%%%%%%%%%%%%%%%%%%%%%%%%%%%%%%%%%%%
\section{Preliminaries}
%%%%%%%%%%%%%%%%%%%%%%%%%%%%%%%%%%%%%%%%%%%%%%%%%%%%%%%%%%%%%%%%%%%%%%%%%
In this section we recall the notion of dualizing $k$-varieties introduced by Auslander and Reiten in \cite{AR}.
We also recall the definition of recollements of abelian categories, as well as some basic properties which are needed in this paper.
%%%%%%%%%%%%%%%%%%%%%%%%%%%%%%%%%%%%%%%%%%%%%%%%%%%%%%%%%%%%%%%%%%%%%%%%%
\subsection{Dualizing $k$-varieties}
%%%%%%%%%%%%%%%%%%%%%%%%%%%%%%%%%%%%%%%%%%%%%%%%%%%%%%%%%%%%%%%%%%%%%%%%%
We recall from \cite{AR} some basic facts about dualizing $k$-varieties.
We denote by $\mathsf{Ab}$ the category of abelian groups.
For an essentially small category $\A$,
a \textit{(right) $\A$-module}  is defined to be a contravariant functor $\A \rightarrow \mathsf{Ab}$ and a \textit{morphism} $X\rightarrow Y$ between $\A$-modules $X$
and $Y$ is a natural transformation. Thus we define an abelian category of $\A$-modules which is denoted by $\Mod\A$.
In the category $\Mod\A$, the morphism-space $(\Mod\A)(X,Y)$ is usually denoted by $\Hom_\A(X,Y)$.

In the case that given categories are $k$-linear, it is natural to consider, instead of the category of additive functors to $\mathsf{Ab}$; the equivalent category of $k$-linear functors to $\Mod k$, which is denoted by the same symbol.

An $\A$-module $X$ is \textit{finitely generated} if there exists an epimorphism $\A(-,a)\twoheadrightarrow X$ for some $a\in\A$.
An $\A$-module $X$ is said to be \textit{finitely presented} if there exists an exact sequence
$$\A(-,a_1)\rightarrow\A(-,a_0)\rightarrow X\rightarrow 0$$
for some $a_0,a_1\in\A$.
We denote by $\mod\A$ the full subcategory of finitely presented $\A$-modules.

For an arbitrary category $\A$, although the subcategory $\mod\A$ is closed under cokernels and extensions in $\Mod\A$,
it is not necessarily abelian since it is not necessarily closed under kernels.
Let $f:Y\rightarrow Z$ be a morphism in $\A$.
We call a morphism $g:X\rightarrow Y$ a \textit{weak-kernel for $f$} if the induced sequence
$$\A(-,X)\xrightarrow{g\circ -}\A(-,Y)\xrightarrow{f\circ -}\A(-,Z)$$
is exact.
We say $\A$ \textit{admits weak-kernels} if every morphism in $\A$ has a weak-kernel.
The notion of \textit{weak-cokernel} is defined dually.
We recall the following well-known fact.

\begin{lemma}\label{lem:weak-kernel}
The following are equivalent for a category $\A$.
\begin{itemize}
\item[(i)] The category $\A$ admits weak-kernels.
\item[(ii)] The full subcategory $\mod\A$ is an exact abelian subcategory in $\Mod\A$,
that is, it is abelian and the canonical inclusion $\mod\A\hookrightarrow\Mod\A$ is exact.
\end{itemize}
\end{lemma}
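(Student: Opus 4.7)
The plan is to prove the two implications separately, with the Yoneda lemma mediating between weak-kernels in $\A$ and kernels in $\mod\A$.

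For (ii)$\Rightarrow$(i), given $f\colon Y\to Z$ in $\A$, the morphism $\A(-,f)$ lies in $\mod\A$, so by the exact-abelian hypothesis its kernel $K$ taken in $\Mod\A$ belongs to $\mod\A$ and is in particular finitely generated. Any epimorphism $\A(-,X)\twoheadrightarrow K$ composed with $K\hookrightarrow\A(-,Y)$ is, by Yoneda, induced by some $g\colon X\to Y$ in $\A$, and the exactness of $\A(-,X)\to\A(-,Y)\to\A(-,Z)$ in $\Mod\A$ is precisely the weak-kernel property of $g$.

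For (i)$\Rightarrow$(ii), since $\mod\A$ is automatically closed under cokernels in $\Mod\A$, the task reduces to showing it is closed under kernels. I would proceed in two stages. First, for $f\colon Y\to Z$ in $\A$, pick a weak-kernel $g\colon X\to Y$ of $f$ and then a weak-kernel $h\colon W\to X$ of $g$; the defining exactness of weak-kernels yields an exact sequence $\A(-,W)\to\A(-,X)\to\A(-,Y)\to\A(-,Z)$, so $\Ker(\A(-,f))=\Im(\A(-,g))=\Cok(\A(-,h))$ is finitely presented. Second, for $\phi\colon M\to N$ in $\mod\A$, fix projective presentations $\A(-,a_1)\xrightarrow{\alpha}\A(-,a_0)\to M\to 0$ and $\A(-,b_1)\xrightarrow{\beta}\A(-,b_0)\to N\to 0$, lift $\phi$ to $(\phi_0,\phi_1)$ using projectivity of representables, and form the pullback-type object $P=\Ker((\phi_0,-\beta)\colon\A(-,a_0)\oplus\A(-,b_1)\to\A(-,b_0))$, which is finitely presented by Stage one since $(\phi_0,-\beta)$ is induced by a morphism in $\A$. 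A short diagram chase identifies $\Ker\phi$ as the cokernel of $\A(-,a_1)\oplus\Ker\beta\to P$, $(u,v)\mapsto(\alpha u,\phi_1 u+v)$, where $\Ker\beta$ is again finitely presented by Stage one; closure of $\mod\A$ under cokernels then delivers $\Ker\phi\in\mod\A$.

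The main obstacle is arranging the second stage so that $\Ker\phi$ appears as a cokernel between objects already known to lie in $\mod\A$; the key inputs are the lift of $\phi$ to the presentations (which exists by projectivity of the representables) and the identification of $\Ker\phi$ via the pullback $P$. Everything else is a formal consequence of the closure of $\mod\A$ under cokernels together with Stage one, so the substance of the proof lies entirely in the weak-kernel iteration producing Stage one.
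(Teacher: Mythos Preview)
Your proof is correct. Note, however, that the paper does not actually give a proof of this lemma: it is introduced with the phrase ``We recall the following well-known fact'' and stated without argument, so there is no paper proof to compare against.

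Your argument follows the standard route one finds for this fact. The direction (ii)$\Rightarrow$(i) is immediate from Yoneda, as you say. For (i)$\Rightarrow$(ii), the essential content is your Stage one --- iterating weak-kernels to exhibit $\Ker\A(-,f)$ as a cokernel of a map of representables --- and your Stage two reduction is the usual pullback/diagram-chase trick that bootstraps from kernels of maps between representables to kernels of arbitrary maps in $\mod\A$. Both stages are carried out correctly; in particular, your identification of $\Ker\phi$ as the cokernel of $\A(-,a_1)\oplus\Ker\beta\to P$ checks out, and the inputs $P$ and $\Ker\beta$ are indeed finitely presented by Stage one. The only implicit fact you invoke is that $\mod\A$ is closed under cokernels in $\Mod\A$, which is elementary (combine a presentation of $N$ with a finite generating map for $M$).
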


Like the case for module categories, $\proj\A$ (resp. $\inj\A$) denotes the full subcategory of projective (resp. injective) $\A$-modules in $\mod\A$, and the projectively (injectively) stable category will be denoted by $\psmod\A:=(\mod\A )/[\proj\A]$ (resp. $\ismod\A:=(\mod\A )/[\inj\A]$).

In the rest of this subsection,
let $\A$ be a \textit{Krull-Schmidt} $k$-linear category, that is, each object $x\in\A$ admits a decomposition $x\cong \coprod_{i=1}^nx_i$ with $\A(x_i,x_i)$ a local $k$-algebra for any $i\in\{1,\ldots , n\}$.
We also assume that $\A$ is \textit{Hom-finite}, that is, each morphism-space $\A(x,y)$ is a finite dimensional $k$-module.
We denote by $D:=\Hom_k(-,k):\mod k\rightarrow\mod k$ the standard $k$-duality.

\begin{definition}\cite[Section 2]{AR}
A Krull-Schmidt Hom-finite $k$-linear category $\A$ is a \textit{dualizing $k$-variety} if the standard $k$-duality $D:\Mod\A\rightarrow\Mod(\A^{\op}),\ X\mapsto D\circ X$ induces a duality $D:\mod\A\xrightarrow{\sim}\mod(\A^{\op})$.
\end{definition}

It is obvious that $\A$ is a dualizing $k$-variety if and only if so is $\A^{\op}$.
If $\A$ is a dualizing $k$-variety, due to the duality between $\mod\A$ and $\mod (\A^{\op})$, $\mod\A$ is closed under kernels in $\Mod\A$.
Thus $\mod\A$ is an exact abelian subcategory in $\Mod\A$.
By Lemma \ref{lem:weak-kernel}, $\A$ admits weak-kernels and weak-cokernels.
The following proposition gives us basic examples of dualizing $k$-varieties.

\begin{proposition}\label{prop:dv1}\cite[Prop. 2.6]{AR}
Suppose $\A$ is a dualizing $k$-variety. Then $\mod\A$ is a dualizing $k$-variety. Moreover, $\mod\A$ admits injective hulls and projective covers.
\end{proposition}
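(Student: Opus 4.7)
My plan is to verify three things in turn: (i) $\mod\A$ is Krull--Schmidt and Hom-finite; (ii) the standard $k$-duality $D$ restricts to a duality $\mod(\mod\A)\simeq\mod((\mod\A)^{\op})$; and (iii) $\mod\A$ admits projective covers and injective hulls.

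For (i), I would apply $\Hom_\A(-,Y)$ to a projective presentation $\A(-,a_1)\to\A(-,a_0)\to X\to 0$ of $X\in\mod\A$ and use Yoneda to embed $\Hom_\A(X,Y)$ into $Y(a_0)$, which is finite-dimensional since $Y$ is finitely presented and $\A$ is Hom-finite. The Krull--Schmidt property follows since $\mod\A$ is abelian (hence idempotent complete) and endomorphism rings of indecomposables are finite-dimensional, hence local, $k$-algebras.

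Part (ii) is the core. The key lemma is that for every $X\in\mod\A$ the functor $D(\mod\A)(-,X)$ is finitely presented as a covariant functor on $\mod\A$. Since $\A^{\op}$ is also dualizing, I take a projective presentation $\A(b_1,-)\to\A(b_0,-)\to DX\to 0$ in $\mod(\A^{\op})$ and apply $D$ to obtain an injective copresentation
\[
0\to X\to D\A(b_0,-)\to D\A(b_1,-)
\]
in $\mod\A$ (the terms are injective because the contravariant equivalence $D:\mod(\A^{\op})\to(\mod\A)^{\op}$ sends projectives to injectives). Applying $\Hom_{\mod\A}(Y,-)$ together with the natural identification $\Hom_{\mod\A}(Y,D\A(b,-))\cong DY(b)$---a Yoneda computation that reduces to the representable case and extends along projective presentations of $Y$---and then applying $D$ once more (using $DDY(b)=Y(b)$) yields
\[
Y(b_1)\to Y(b_0)\to D\Hom_{\mod\A}(Y,X)\to 0.
\]
Since $Y\mapsto Y(b_i)$ is the representable $(\mod\A)(\A(-,b_i),-)$ by Yoneda, this is the required projective presentation in $\mod((\mod\A)^{\op})$. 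Given the lemma, for any $F\in\mod(\mod\A)$ with presentation $(\mod\A)(-,X_1)\to(\mod\A)(-,X_0)\to F\to 0$, applying $D$ produces the exact sequence $0\to DF\to D(\mod\A)(-,X_0)\to D(\mod\A)(-,X_1)$ in $\Mod((\mod\A)^{\op})$; since $\mod\A$ is abelian, $(\mod\A)^{\op}$ admits weak-kernels, so by Lemma~\ref{lem:weak-kernel} the subcategory $\mod((\mod\A)^{\op})$ is closed under kernels and $DF$ lies in it. The symmetric argument starting from $\A^{\op}$ supplies the inverse, yielding the desired duality.

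For (iii), I would use Krull--Schmidt Hom-finiteness to refine a projective presentation $\A(-,a_0)\twoheadrightarrow X$ into a projective cover by discarding redundant indecomposable summands, and dually build injective hulls from the injective copresentations constructed in (ii). The hard part will be the key identification $\Hom_{\mod\A}(Y,D\A(b,-))\cong DY(b)$ and orchestrating the two applications of $D$ so that an injective copresentation of $X$ in $\mod\A$ becomes a projective presentation of $D(\mod\A)(-,X)$ in $\mod((\mod\A)^{\op})$; once that interplay is set up, the rest of the proof unwinds formally.
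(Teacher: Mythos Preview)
The paper does not supply its own proof of this proposition; it is quoted from \cite[Prop.~2.6]{AR} without argument. So there is nothing in the paper to compare against, and your task is really to reconstruct the Auslander--Reiten proof.

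Your outline is essentially correct and follows the standard route. Part~(i) is fine; the one step worth making explicit is that a finite-dimensional $k$-algebra with no nontrivial idempotents is local (so indecomposables in an idempotent-complete Hom-finite category have local endomorphism rings). Part~(ii) is the heart of the argument and you have the right mechanism: use the duality on $\mod\A$ to produce an injective copresentation of $X$, then the identification $\Hom_{\mod\A}(Y,D\A(b,-))\cong DY(b)$ converts it into a projective presentation of $D(\mod\A)(-,X)$ in $\mod((\mod\A)^{\op})$. That identification is exactly the ``tensor--Hom'' adjunction in this context (or, as you say, a Yoneda computation extended along presentations), and both sides are exact in $Y$ since $D\A(b,-)$ is injective, so agreement on representables suffices. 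Your closure argument via Lemma~\ref{lem:weak-kernel} is correct: $\mod\A$ abelian implies $(\mod\A)^{\op}$ has weak-kernels.

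Part~(iii) is slightly underspecified. ``Discarding redundant summands'' is the right intuition, but the clean statement is that a Hom-finite Krull--Schmidt abelian category with enough projectives automatically has projective covers: any epimorphism $P\twoheadrightarrow X$ from a projective can be refined to a right-minimal one because $\End(P)$ is semiperfect. Injective hulls then follow by the duality you established in~(ii), or by the dual argument using the injective copresentations already constructed. With these small clarifications your proof goes through.
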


Next we recall the definition of functorially finite subcategories and collect some lemmas about functorially finite subcategories in dualizing $k$-varieties which will be used later.
The symbol $X|_\B$ denotes the restricted functor of an $\A$-module $X$ onto a subcategory $\B$.
Especially, for a functor category $\mod \A$ and its full subcategory $\B$
we also write $\Ext^i_\A(\B,x):=\Ext^i_\A(-,x)|_\B$, where $x\in\mod \A$ and $i\in\ZZ_{\geq 0}$.

\begin{definition}
Let $\A$ be an arbitrary category and $\B$ a full subcategory in $\A$.
\begin{itemize}
\item[(i)] The full subcategory $\B$ is \textit{contravariantly finite} if the functor $\A(-,x)|_\B$ is a finitely generated $\B$-module for each $x\in\A$.
\item[(ii)] The full subcategory $\B$ is \textit{covariantly finite} if the functor $\A(x,-)|_\B$ is a finitely generated $\B^{\op}$-module for each $x\in\A$.
\item[(iii)] We call $\B$ a \textit{functorially finite} if it is contravariantly finite and covariantly finite.
\end{itemize}
\end{definition}

If a $\B$-module $\A(-,x)|_\B$ is finitely generated, then there exists an epimorphism
$$\B(-,b)\xrightarrow{\alpha\circ -}\A(-,x)|_\B\rightarrow 0$$
in $\Mod\B$ for some $b\in\B$.
Then we call the induced map $\alpha :b\rightarrow x$ a \textit{right $\B$-approximation of $x$}.
Dually we define the notion of \textit{left $\B$-approximation}.

It is easy to verify that the subcategories ${\proj}\A$ and ${\inj}\A$ are functorially finite
in $\mod \A$ if $\A$ is a dualizing $k$-variety.
The following result gives a criterion for a given subcategory to be a dualizing $k$-variety.

\begin{proposition}\label{prop:dv2}\cite[Thm. 2.3]{AS}\cite[Prop. 1.2]{Iya2}
Let $\B$ be a functorially finite subcategory in a dualizing $k$-variety $\A$.
Then $\B$ is a dualizing $k$-variety.
\end{proposition}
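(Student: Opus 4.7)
The plan is to verify the three conditions defining a dualizing $k$-variety for $\B$. The conditions Krull-Schmidt, Hom-finite, and $k$-linear are inherited directly from $\A$, since $\B$ is a full additive $k$-linear subcategory closed under summands: indecomposables of $\B$ retain their local endomorphism rings from $\A$ and morphism spaces remain finite dimensional. The real work is the duality.

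As a preparatory step, I would first show that $\B$ admits weak-kernels, so that $\mod\B$ is an abelian subcategory of $\Mod\B$ by Lemma \ref{lem:weak-kernel}. Given $f:b\to b'$ in $\B$, I take a weak-kernel $g:a\to b$ in $\A$ (available since the dualizing $k$-variety $\A$ has weak-kernels) and compose it with a right $\B$-approximation $h:b''\to a$ (available by contravariant finiteness of $\B$). A short Yoneda chase shows that $gh$ is a weak-kernel of $f$ in $\B$; the dual argument using covariant finiteness produces weak-cokernels, so $\mod(\B^{\op})$ is also abelian.

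The heart of the argument is showing $D(\mod\B)\subseteq\mod(\B^{\op})$; the reverse inclusion then follows by symmetry, replacing $\A,\B$ by $\A^{\op},\B^{\op}$ and using $D^2\cong\identity$. Starting from a presentation $\B(-,b_1)\to\B(-,b_0)\to M\to 0$ of $M\in\mod\B$, applying $D$ yields a left-exact sequence $0\to DM\to D\B(-,b_0)\to D\B(-,b_1)$, so it suffices to show $D\B(-,b)\in\mod(\B^{\op})$ for each $b\in\B$. This is the main obstacle.

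To handle it, since $\A$ is dualizing, I fix a presentation $\A(a_1,-)\xrightarrow{\psi^*}\A(a_0,-)\to D\A(-,b)\to 0$ in $\mod(\A^{\op})$ and use covariant finiteness of $\B$ to choose left $\B$-approximations $\alpha_i:a_i\to b_i^L$; these induce epimorphisms $\B(b_i^L,-)\twoheadrightarrow\A(a_i,-)|_\B$ in $\Mod(\B^{\op})$. Lifting $\psi^*$ along the projective $\B(b_1^L,-)$ yields a morphism $\tilde\psi:b_0^L\to b_1^L$ whose induced map $\B(b_1^L,-)\to\B(b_0^L,-)$ has cokernel surjecting onto $D\A(-,b)|_\B=D\B(-,b)$. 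The kernel of that surjection need not be generated solely by this lift; the excess corresponds to maps $b_0^L\to b'$ vanishing after composing with $\alpha_0$, and these are controlled by forming a weak-cokernel $b_0^L\to c$ of $\alpha_0$ in $\A$ together with a left $\B$-approximation $c\to b_c$, contributing one additional generator $\B(b_c,-)\to\B(b_0^L,-)$. A diagram chase then produces the finite presentation $\B(b_1^L,-)\oplus\B(b_c,-)\to\B(b_0^L,-)\to D\B(-,b)\to 0$ in $\mod(\B^{\op})$, completing the proof.
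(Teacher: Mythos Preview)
The paper does not give its own proof of this proposition; it simply records the statement and cites \cite[Thm.~2.3]{AS} and \cite[Prop.~1.2]{Iya2}. So there is nothing to compare your argument against within the paper itself.

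That said, your proof is correct and follows the standard route. One streamlining worth noting: your last paragraph is, in effect, a bare-hands proof that the restriction functor $\Mod(\A^{\op})\to\Mod(\B^{\op})$ preserves finite presentability when $\A$ has weak-cokernels and $\B$ is covariantly finite. Since $D\B(-,b)=\bigl(D\A(-,b)\bigr)|_{\B}$ and $D\A(-,b)\in\mod(\A^{\op})$ because $\A$ is dualizing, the conclusion $D\B(-,b)\in\mod(\B^{\op})$ is exactly the dual of Lemma~\ref{lem:adj1}(i)$\Rightarrow$(ii), which the paper proves in Section~2. Invoking that lemma would replace your explicit lift-and-weak-cokernel construction by a one-line citation. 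Your direct argument is nonetheless sound: the decomposition of $\ker\bigl(\B(b_0^L,-)\to D\B(-,b)\bigr)$ into the $\tilde\psi$-part and the weak-cokernel part is exactly right, and the factorizations through the left $\B$-approximations go through as you describe.

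One minor point: you assume $\B$ is closed under direct summands. This is indeed needed for $\B$ to inherit the Krull--Schmidt property (otherwise an object of $\B$ could have a nontrivial idempotent in its endomorphism ring without splitting inside $\B$), and it is implicit in the cited sources, though the present paper does not make it explicit.
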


%%%%%%%%%%%%%%%%%%%%%%%%%%%%%%%%%%%%%%%%%%%%%%%%%%%%%%%%%%%%%%%%%%%
\subsection{Recollements of abelian categories}
%%%%%%%%%%%%%%%%%%%%%%%%%%%%%%%%%%%%%%%%%%%%%%%%%%%%%%%%%%%%%%%%%%%
In this subsection we recall some basic facts on recollements of abelian categories.
Let us start with introducing basic terminologies.
A pair of functors $L:\A\rightarrow\B$ and $R:\B\rightarrow\A$ is said to be an \textit{adjoint pair} if there exists a bifunctorial isomorphism
$\A(a,Rb)\cong \B(La,b)$
in $a\in\A$ and $b\in\B$.
We simply denote this adjoint pair by $(L\dashv R)$.
For a functor $F:\A\rightarrow \B$, we often denote its right (resp. left) adjoint by $F_\rho$ (resp. $F_\lambda$).
If $F$ admits a right adjoint $F_\rho$ as well as a left adjoint $F_\lambda$,
we denote this situation by
$$\xymatrix@C=1.2cm{\A\ar[r]|-{{F}}
&\B .\ar@/^1.2pc/[l]^-{F_\rho}\ar_-{F_\lambda}@/_1.2pc/[l]}$$

Throughout the rest of this subsection $\A$ is always assumed to be an abelian category.
To begin with, we recall the definition of recollement, following \cite{FP,Psa}
(see also \cite[Ch. 4]{Pop}).
\begin{definition}\label{def:recollement}
Let $\A,\B$ and $\C$ be abelian categories.
A \textit{recollement of $\A$ relative to $\B$ and $\C$} is given by six functors
$$\xymatrix@C=1.2cm{\B\ar[r]|-{{e}}
&\A\ar[r]|-{q}\ar@/^1.2pc/[l]^-{e_\rho}\ar_-{e_\lambda}@/_1.2pc/[l]
&\C \ar@/^1.2pc/[l]^{q_\rho}\ar@/_1.2pc/[l]_{q_\lambda}}$$
such that
\begin{itemize}
\item[(R1)] They form four adjoint pairs $(e_\lambda\dashv e), (e\dashv e_\rho), (q_\lambda\dashv q)$ and $(q\dashv q_\rho)$.
\item[(R2)] The functors $q_\lambda,q_\rho$ and $e$ are fully faithful.
\item[(R3)] $\Im e=\Ker q$.
\end{itemize}
We denote this recollement by $(\B,\A,\C)$ for short.
\end{definition}

Notice that the functors $q$ and $e$ are exact, since each of them admits a right adjoint and a left adjoint.
The following proposition shows that a recollement is a special case of Serre quotients.

\begin{proposition}\label{prop:Serre}\cite[Thm. 4.9]{Pop}
Let $q: \A\rightarrow \C$ be an exact functor.
If it admits a fully faithful right adjoint $q_\rho$ (resp. left adjoint $q_\lambda$), the functor $q$ induces an equivalence between $\C$ and the Serre quotient $\A /\Ker q$ of $\A$ with respect to the Serre subcategory $\Ker q$.
\end{proposition}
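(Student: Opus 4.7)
The plan is to use the universal property of the Serre quotient combined with the elementary fact that a right adjoint is fully faithful if and only if the counit is an isomorphism (dually for left adjoints).

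First I would verify that $\Ker q$ is indeed a Serre subcategory of $\A$. Since $q$ is exact, applying $q$ to a short exact sequence $0\to a'\to a\to a''\to 0$ yields a short exact sequence in $\C$, so $qa=0$ if and only if $qa'=qa''=0$. Hence $\Ker q$ is closed under subobjects, quotients, and extensions. The Serre quotient $Q\colon\A\to\A/\Ker q$ and its universal property then produce a unique exact functor $\bar q\colon\A/\Ker q\to\C$ satisfying $q=\bar q\circ Q$. It remains to show $\bar q$ is an equivalence; I will treat the case where $q_\rho$ is fully faithful, the other case being dual.

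Next I would exploit the adjunction $(q\dashv q_\rho)$ with unit $\eta\colon\identity_\A\to q_\rho q$ and counit $\epsilon\colon q q_\rho\to\identity_\C$. Since $q_\rho$ is fully faithful, $\epsilon$ is a natural isomorphism. The triangle identity $\epsilon_{qa}\circ q\eta_a=\identity_{qa}$ then forces $q\eta_a$ to be an isomorphism for every $a\in\A$. Because $q$ is exact, this means both $\Ker\eta_a$ and $\Cok\eta_a$ lie in $\Ker q$; by the standard description of isomorphisms in a Serre quotient, $Q\eta_a$ is therefore an isomorphism in $\A/\Ker q$. This yields a natural isomorphism $\identity_{\A/\Ker q}\xrightarrow{\sim} (Q\circ q_\rho)\circ\bar q$, since $\bar q(Qa)=qa$ and $Qq_\rho qa=Q(q_\rho q a)$ is identified with $Qa$ via $Q\eta_a$.

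Conversely, $\bar q\circ(Q\circ q_\rho)=q\circ q_\rho\xrightarrow{\epsilon}\identity_\C$ is already a natural isomorphism. Thus $Q\circ q_\rho$ is a quasi-inverse to $\bar q$, proving that $\bar q\colon\A/\Ker q\xrightarrow{\sim}\C$ is an equivalence of categories. The dual argument, using the unit of $(q_\lambda\dashv q)$, handles the case where $q_\lambda$ is fully faithful.

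The main obstacle I anticipate is purely notational: making sure the universal property of Serre quotients is invoked cleanly (in particular that morphisms whose kernel and cokernel are both in $\Ker q$ are inverted by $Q$), and being explicit that the natural transformation $\eta_a$ is the one witnessing the inverse equivalence. Once this is set up carefully, no further calculation is required—the triangle identities together with the fully faithfulness of $q_\rho$ do all the work.
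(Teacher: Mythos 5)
Your argument is correct. Note that the paper itself gives no proof of this proposition; it simply cites \cite[Thm.\ 4.9]{Pop}, so there is nothing internal to compare against. Your route is the standard one: exactness of $q$ makes $\Ker q$ a Serre subcategory and yields the induced exact functor $\bar q$ via the universal property of the quotient; fully faithfulness of $q_\rho$ is equivalent to the counit $\epsilon$ being an isomorphism; the triangle identity then forces $q\eta_a$ to be invertible, so exactness of $q$ places $\Ker\eta_a$ and $\Cok\eta_a$ in $\Ker q$, whence $Q\eta_a$ is invertible in $\A/\Ker q$; together with $\epsilon$ this exhibits $Q\circ q_\rho$ as a quasi-inverse of $\bar q$. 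The one point you rightly flag as needing care is descending $Q\eta\colon Q\to (Q q_\rho \bar q)Q$ to a natural transformation $\identity_{\A/\Ker q}\to Q q_\rho\bar q$: this is exactly what the $2$-categorical form of the universal property of the Serre quotient provides (precomposition with $Q$ is fully faithful on functors out of $\A/\Ker q$), and since $Q$ is bijective on objects the resulting transformation is an isomorphism componentwise. With that spelled out, the proof is complete, and the dual case for $q_\lambda$ follows by the dual triangle identity as you say.
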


The following notions play a central role in Section 3.

\begin{proposition}\cite[Prop. 2.6]{Psa}\label{prop:defining_exact_sequence}
For a given recollement $(\B,\A,\C)$ and an object $x\in\A$,
\begin{itemize}
\item[(i)] we have an exact sequence $0\rightarrow (ee_\rho)x\xrightarrow{\eta}x\xrightarrow{\epsilon}(q_\rho q)x\rightarrow y\rightarrow 0$,
where $\eta$ and $\epsilon$ are the counit and the unit of the adjoint pairs, respectively.
We call it the \textnormal{right-defining exact sequence}.
\item[(ii)] we have an exact sequence $0\rightarrow y^\prime\rightarrow (q_\rho q)x\xrightarrow{\eta^\prime}x\xrightarrow{\epsilon^\prime}(e e_\rho)x\rightarrow 0$,
where $\eta^\prime$ and $\epsilon^\prime$ are the counit and unit of the adjoint pairs, respectively.
We call it the \textnormal{left-defining exact sequence}.
\end{itemize}
Moreover, if there exists an exact sequence $0\rightarrow x^\prime\rightarrow x\rightarrow x^{\prime\prime}\rightarrow x^{\prime\prime\prime}\rightarrow 0$ with
$x^\prime,x^{\prime\prime\prime}\in\Im e$ and $x^{\prime\prime}\in\Im q_\rho$,
then it is isomorphic to the right-defining exact sequence of $x$.
The dual statement holds for the left-defining exact sequences.
\end{proposition}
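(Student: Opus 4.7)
The plan is to construct the right-defining exact sequence in (i) directly from the canonical unit and counit morphisms of the recollement, to use the exactness of $q$ together with the recollement axioms to identify its kernel and cokernel, and then to obtain (ii) dually and the uniqueness clause from the same type of analysis.

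First I would assemble the complex
\[
(ee_\rho)x\xrightarrow{\eta_x}x\xrightarrow{\epsilon_x}(q_\rho q)x,
\]
where $\eta_x$ is the counit of $(e\dashv e_\rho)$ and $\epsilon_x$ the unit of $(q\dashv q_\rho)$, and set $y:=\Cok\epsilon_x$. That $\epsilon_x\circ\eta_x=0$ is immediate from $(q\dashv q_\rho)$: morphisms $(ee_\rho)x\to(q_\rho q)x$ correspond to morphisms $q(ee_\rho)x\to qx$, and $q\circ e=0$ by (R3) makes the source zero; this yields a canonical factoring $\alpha:(ee_\rho)x\to\Ker\epsilon_x$ of $\eta_x$ through the kernel. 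Since $q_\rho$ is fully faithful (R2), the counit $qq_\rho\to\identity$ of $(q\dashv q_\rho)$ is an isomorphism, and the triangle identity then makes $q(\epsilon_x):qx\to qq_\rho qx$ an isomorphism. Applying the exact functor $q$ to the four-term complex therefore forces $\Ker\epsilon_x,\ y\in\Ker q=\Im e$ by (R3).

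To identify $\Ker\epsilon_x$ with $(ee_\rho)x$ via $\alpha$, I would use Yoneda along the equivalence $e:\B\simeq\Im e$. For every $b\in\B$ the vanishing $\A(eb,(q_\rho q)x)\cong\C(qeb,qx)=0$ shows that every morphism $eb\to x$ factors uniquely through $\Ker\epsilon_x$, so composition with the inclusion $\iota:\Ker\epsilon_x\hookrightarrow x$ gives $\A(eb,\Ker\epsilon_x)\xrightarrow{\sim}\A(eb,x)$. On the other hand, the triangle identity for $(e\dashv e_\rho)$ together with the fact that the unit $\identity\to e_\rho e$ is an iso (since $e$ is fully faithful) makes $e_\rho\eta_x$ an iso, whence $\A(eb,\eta_x)\cong\B(b,e_\rho\eta_x)$ is also an iso. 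Since $\eta_x=\iota\alpha$, we conclude $\A(eb,\alpha)$ is an iso for every $b\in\B$, and because the source and target of $\alpha$ both lie in $\Im e\simeq\B$ this forces $\alpha$ itself to be an iso, completing the exactness of the right-defining sequence in (i).

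For (ii) the argument is dual, using that $q_\lambda$ is fully faithful so the unit of $(q_\lambda\dashv q)$ is an iso, together with the triangle identities for $(q_\lambda\dashv q)$ and $(e_\lambda\dashv e)$. The uniqueness clause follows by comparison: given exact $0\to x'\to x\xrightarrow{f}x''\xrightarrow{g}x'''\to 0$ with $x',x'''\in\Im e$ and $x''\in\Im q_\rho$, applying $q$ kills $x'$ and $x'''$ to give $qf:qx\xrightarrow{\sim}qx''$; since $x''\in\Im q_\rho$, the counit gives $q_\rho qx''\xrightarrow{\sim}x''$, and by uniqueness of adjoints $f$ equals the composite $x\xrightarrow{\epsilon_x}q_\rho qx\xrightarrow{q_\rho(qf)}q_\rho qx''\xrightarrow{\sim}x''$, whence $x'\cong\Ker f\cong(ee_\rho)x$ and $x'''\cong\Cok f\cong y$. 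The main obstacle is the sharp identification $\alpha:(ee_\rho)x\xrightarrow{\sim}\Ker\epsilon_x$ realized by the canonical map itself rather than as abstract objects of $\Im e$; bridging this gap is exactly the content of the Yoneda and triangle-identity argument in the third paragraph.
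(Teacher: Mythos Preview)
The paper does not supply its own proof of this proposition; it is quoted from \cite[Prop.~2.6]{Psa} and used as a black box. So there is no in-paper argument to compare against, and the question is simply whether your argument stands on its own.

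It does. Your proof is correct and follows the expected line: the vanishing of $\epsilon_x\circ\eta_x$ via the adjunction $(q\dashv q_\rho)$ and $qe=0$; the identification of $\Ker\epsilon_x$ and $\Cok\epsilon_x$ as objects of $\Ker q=\Im e$ by exactness of $q$ and the triangle identity making $q(\epsilon_x)$ invertible; and the Yoneda step showing the canonical comparison $\alpha:(ee_\rho)x\to\Ker\epsilon_x$ is an isomorphism, using that $e_\rho(\eta_x)$ is invertible and $\A(eb,(q_\rho q)x)=0$. The uniqueness clause is also handled correctly: naturality of the unit gives $f=\epsilon_{x''}^{-1}\circ q_\rho q(f)\circ\epsilon_x$, and the induced isomorphism $q_\rho qx\xrightarrow{\sim}x''$ over $\identity_x$ then forces the kernel and cokernel to match. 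One small remark: in the uniqueness argument you should perhaps say explicitly that $\epsilon_{x''}$ is invertible because $x''\in\Im q_\rho$ (this is the triangle identity again, dual to your earlier use), rather than invoking ``the counit'' directly, since the relevant map $q_\rho qx''\to x''$ is the inverse of the unit at $x''$.
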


%%%%%%%%%%%%%%%%%%%%%%%%%%%%%%%%%%%%%%%%%%%%%%%%%%%%%%%%%%%%%%%%%%%%%%%%%
\section{Recollements arising from dualizing $k$-varieties}
%%%%%%%%%%%%%%%%%%%%%%%%%%%%%%%%%%%%%%%%%%%%%%%%%%%%%%%%%%%%%%%%%%%%%%%%%
Let us start with introducing basic terminologies.
We call a biadditive bifunctor from $\B^{\op}\times\A$ to $\mathsf{Ab}$ a \textit{$\B$-$\A$-bimodule}.
We often regard $\A$ as an $\A$-$\A$-bimodule by the following way
$$_\A\A_\A:=\A(-,+):\A^{\op}\times\A\rightarrow \mathsf{Ab},\ (a^{\op}, a^\prime)\mapsto \A(a^\prime,a).$$
Consider a full subcategory $\B$ in $\A$.
The canonical inclusion $i :\B\hookrightarrow\A$ gives a natural $\A$-$\B$-bimodule structure on $\A$ by
$$_\A\A_\B:=\A(i(-),+):\A^{\op}\times\B\rightarrow\mathsf{Ab},\ (a^{\op}, b)\mapsto \A(i(b),a)=\A(b,a).$$
Similarly, we define a $\B$-$\A$-bimodule $_\B\A_\A:=\A(-,i(+))$.
The first step is to show the following elementary proposition,
which is a categorical analog of the recollement (\ref{idempotent_recollement}) and well-known for experts.
Because we could not find a proper reference, we include a detailed proof.

\begin{proposition}\label{prop:recollement2}
Let $(\A,\B)$ be the pair of an additive category $\A$ and its full subcategory $\B$.
Then we have the following recollement:
\begin{equation}\label{recollement2}
\xymatrix@C=1.2cm{\Mod(\A/[\B])\ar[r]|-{{p^*}}
&\Mod\A\ar[r]|-{i^*}\ar@/^1.2pc/[l]^-{p^*_\rho}\ar_-{p^*_\lambda}@/_1.2pc/[l]
&\Mod\B .\ar@/^1.2pc/[l]^{i^*_\rho}\ar@/_1.2pc/[l]_{i^*_\lambda}}
\end{equation}
\end{proposition}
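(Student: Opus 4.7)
The plan is to define the six functors and verify (R1)--(R3) of Definition \ref{def:recollement} directly. The middle functors are the obvious pullbacks: $i^*:\Mod\A\to\Mod\B$ sends $X$ to $X|_\B=X\circ i^{\op}$, and $p^*:\Mod(\A/[\B])\to\Mod\A$ sends $X$ to $X\circ p^{\op}$. For the outer functors I take the standard Kan-extension constructions: $i^*_\lambda$ and $i^*_\rho$ are the induction and coinduction functors along $i$, which exist because $\mathsf{Ab}$ is complete and cocomplete; $p^*_\lambda X$ is the largest quotient of $X$ on which $[\B]$ acts as zero, and $p^*_\rho X$ is the largest such submodule.

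The four adjunctions of (R1) are standard. The pairs $(i^*_\lambda\dashv i^*)$ and $(i^*\dashv i^*_\rho)$ are the usual Kan-extension adjunctions, which reduce via Yoneda to tensor-Hom formulas over the bimodules ${}_\B\A_\A$ and ${}_\A\A_\B$. The pairs $(p^*_\lambda\dashv p^*)$ and $(p^*\dashv p^*_\rho)$ follow directly from the universal properties defining $p^*_\lambda$ and $p^*_\rho$, combined with the identification of $\Im p^*$ as those $\A$-modules on which $[\B]$ acts as zero.

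For (R2), full faithfulness of $p^*$ is immediate because $p:\A\to\A/[\B]$ is bijective on objects and surjective on morphism spaces, so a natural transformation between pullbacks has the same component data as a natural transformation between the originals, with equivalent naturality conditions. Full faithfulness of $i^*_\lambda$ and $i^*_\rho$ is the classical consequence of $i$ being fully faithful: the unit $Y\to i^*i^*_\lambda Y$ and counit $i^*i^*_\rho Y\to Y$ are isomorphisms on representables and extend by colimit/limit commutation.

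The key step is (R3), i.e.\ $\Im p^*=\Ker i^*$. The crucial observation is that for each $b\in\B$ the identity $1_b$ factors trivially through $b\in\B$ and hence lies in $[\B]$, so it becomes the zero morphism in $\A/[\B]$; consequently any $X\in\Mod(\A/[\B])$ satisfies $\identity_{X(b)}=X(1_b)=0$, forcing $X(b)=0$ and hence $\Im p^*\subseteq\Ker i^*$. Conversely, if $X\in\Mod\A$ with $X|_\B=0$ and $f:a\to a'$ factors as $a\xrightarrow{g}b\xrightarrow{h}a'$ with $b\in\B$, then $X(f)=X(g)X(h)$ factors through $X(b)=0$, so $X$ annihilates $[\B]$ and descends uniquely to $\A/[\B]$. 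The only real obstacle in the argument is keeping the conventions for contravariant functors and bimodule actions consistent across the four adjunctions; once that is settled, the verification is routine bookkeeping.
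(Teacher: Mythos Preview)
Your argument is correct and follows essentially the same route as the paper: restriction functors $i^*,p^*$ with their Kan-extension/tensor--Hom adjoints, full faithfulness deduced from $i$ being fully faithful and $p$ being surjective on morphisms, and the identification $\Im p^*=\Ker i^*$ via the observation that modules over $\A/[\B]$ are precisely the $\A$-modules vanishing on $\B$ (this is the paper's Lemma~\ref{lem:stable}). The only cosmetic difference is that the paper verifies $i^*i^*_\lambda\cong\identity$ and $i^*i^*_\rho\cong\identity$ by explicit bimodule computations rather than invoking the general Kan-extension fact, and describes $p^*_\lambda,p^*_\rho$ via tensor/Hom over $\A/[\B]$ rather than as the largest quotient/subobject annihilated by $[\B]$.
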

\begin{proof}
(i) We shall construct the adjoint pairs on the right side in (\ref{recollement2}).
The inclusion $i:\B\hookrightarrow \A$ induces the natural restriction functor
$i^*:\Mod\A\rightarrow \Mod\B, X\mapsto X|_\B$ which is isomorphic to $\Hom_\A(_\B\A_\A,-)\cong -\otimes_\A(_\A\A_\B)$.
Thus it admits a left adjoint $i^*_\lambda:=-\otimes_\B(_\B\A_\A)$ and a right adjoint $i^*_\rho:=\Hom_\B(_\A\A_\B,-)$.
By easy calculation, we show that $i^*_\lambda$ and $i^*_\rho$ are fully faithful.
In fact, we have the following isomorphisms:
\begin{eqnarray*}
i^*\circ i^*_\lambda&\cong&-\otimes_\B(_\B\A_\A)\otimes_\A(_\A\A_\B)\cong -\otimes_\B\B\ \cong\ {\identity}_{\Mod\B},\\
i^*\circ i^*_\rho&\cong&\Hom_\A(_\B\A_\A,\Hom_\B(_\A\A_\B,-))\\
&\cong&\Hom_\B((_\B\A_\A)\otimes_\A(_\A\A_\B),-)\\
&\cong&\Hom_\B(\B,-)\ \cong\ {\identity}_{\Mod\B}.
\end{eqnarray*}
Thus we have constructed the right side of (\ref{recollement2}).

(ii) We shall construct the adjoint pairs on the left side in (\ref{recollement2}).
The canonical projection $p:\A\rightarrow\A/[\B]$ also induces the natural restriction functor $p^* :\Mod(\A/[\B])\rightarrow \Mod\A$.
By a similar argument to the above, $p^*$ is a fully faithful exact functor which admits
a left adjoint $p^*_\lambda$ and a right adjoint $p^*_\rho$.
Thus we have obtained the left side of (\ref{recollement2}).

(iii) It remains to show that $\Im p^* =\Ker i^*$.
This follows from the next lemma.

\begin{lemma}\label{lem:stable}
Let $X$ be an object in $\Mod\A$.
Then $X$ belongs to $\Im p^*$ if and only if $X$ vanishes on objects in $\B$.
In particular, we have $\Im p^* =\Ker i^*$.
\end{lemma}
\begin{proof}
If $X\in\Im p^*$, then there exists $X^\prime\in\Mod(\A/[\B])$ such that $X\cong p^*X^\prime =X^\prime\circ p$.
The functor $p$ vanishes on objects in $\B$, so does $X$.
Conversely, if $X(b)=0$ for any $b\in\B$, then we have a unique functor $X^\prime:\A/[\B]\rightarrow\mathsf{Ab}$ such that $X\cong X^\prime p\cong p^*X^\prime$.
\end{proof}

We have thus proved Proposition \ref{prop:recollement2}.
\end{proof}

The aim of this section is to prove our main theorem.

\begin{theorem}\label{thm:main}
Let $(\A,\B)$ be the pair of a dualizing $k$-variety $\A$ and its functorially finite subcategory $\B$.
Then we have the following recollement:
\begin{equation}\label{our_recollement}
\xymatrix@C=1.2cm{\mod(\A/[\B])\ar[r]|-{{e}}
&\mod\A\ar[r]|-{q}\ar@/^1.2pc/[l]^-{e_\rho}\ar_-{e_\lambda}@/_1.2pc/[l]
&\mod\B ,\ar@/^1.2pc/[l]^{q_\rho}\ar@/_1.2pc/[l]_{q_\lambda}}
\end{equation}
where these six functors are restricted ones of the functors which appear in the recollement (\ref{recollement2}).
In particular, we have an equivalence $\displaystyle\frac{\mod\A}{\mod(\A /[\B])}\simeq \mod\B$.
\end{theorem}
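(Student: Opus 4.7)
The plan is to derive (\ref{our_recollement}) from the $\Mod$-level recollement (\ref{recollement2}) of Proposition \ref{prop:recollement2} by showing that each of the six functors restricts to the subcategory of finitely presented modules, and then observing that axioms (R1)--(R3) transfer. Since $\A$ is a dualizing $k$-variety, both $\mod\A$ and $\mod\B$ (the latter by Proposition \ref{prop:dv2}) are exact abelian subcategories closed under kernels and cokernels in their ambient $\Mod$ categories; this is what makes the transfer possible, and the Serre quotient statement then follows from Proposition \ref{prop:Serre}.

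For the right-hand functors I argue as follows. For $q=i^*$, given a presentation $\A(-,a_1)\to\A(-,a_0)\to X\to 0$ in $\mod\A$, contravariant finiteness of $\B$ supplies right $\B$-approximations which, combined with weak-kernels in $\A$, yield finite presentations of each $\A(-,a_i)|_\B$ in $\mod\B$; closure of $\mod\B$ under cokernels then puts $X|_\B$ in $\mod\B$. For $q_\lambda=-\otimes_\B({_\B\A_\A})$, it sends $\B(-,b)$ to $\A(-,b)$ and hence preserves finite presentations. For $q_\rho=\Hom_\B({_\A\A_\B},-)$ a direct argument is awkward, so I invoke the $k$-duality $D$ of Proposition \ref{prop:dv1}: since $D$ flips adjunctions, $Dq_\rho D$ is naturally isomorphic to the left adjoint of the restriction functor for $\B^{\op}\subseteq\A^{\op}$, and the latter restricts to finitely presented modules by the argument just given.

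For the left-hand functors, note first that for each $a\in\A$ a right $\B$-approximation $b\to a$ identifies $p^*(\A/[\B])(-,a)=\A(-,a)/[\B](-,a)$ with the cokernel of $\A(-,b)\to\A(-,a)$ in $\mod\A$; applying $p^*$ to a presentation of $X\in\mod(\A/[\B])$ then exhibits $p^*X$ as the cokernel of a map between finitely presented $\A$-modules, so $e=p^*$ restricts. For $e_\lambda=p^*_\lambda$, the Yoneda isomorphism $p^*_\lambda\A(-,a)\cong(\A/[\B])(-,pa)$ shows that representables go to representables, and right-exactness extends this to all of $\mod\A$. For $e_\rho=p^*_\rho$ I apply the $k$-duality trick a second time, reducing to the now-established restriction of $p^*_\lambda$ on the opposite side.

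It remains to verify (R3) at the $\mod$ level, namely $\Im(e|_{\mod\A})=\Ker(q|_{\mod\A})$. The inclusion $\subseteq$ is automatic from (\ref{recollement2}); for the reverse, if $Z\in\mod\A$ vanishes on $\B$, Lemma \ref{lem:stable} gives $Z\cong p^*Z'$ for some $Z'\in\Mod(\A/[\B])$, and fully faithfulness of $p^*$ together with the restriction of $p^*_\lambda$ forces $Z'\cong p^*_\lambda p^*Z'\cong p^*_\lambda Z\in\mod(\A/[\B])$. Axioms (R1) and (R2) are inherited directly from (\ref{recollement2}). The main obstacle I foresee is the pair of right adjoints $q_\rho$ and $e_\rho$: they do not preserve representables, so the only uniform route I see is via the $k$-duality, which is exactly the point at which the dualizing hypothesis on $\A$ is genuinely used.
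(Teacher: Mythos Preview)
Your proposal is correct and follows essentially the same route as the paper: restrict $i^*$, $p^*$, and their left adjoints directly, then obtain the two right adjoints $q_\rho$ and $e_\rho$ by conjugating the opposite-side left adjoints with the $k$-duality $D$, and finally verify (R3) via the characterization $\mod(\A/[\B])=\{X\in\mod\A : X|_\B=0\}$ (which is the paper's Lemma~\ref{lem:stable2}(i)). One small reordering is needed: your $D$-trick for $e_\rho$ tacitly uses that $D$ restricts to a duality $\mod(\A/[\B])\xrightarrow{\sim}\mod((\A/[\B])^{\op})$, i.e.\ that $\A/[\B]$ is itself a dualizing $k$-variety---the paper records this separately as Lemma~\ref{lem:stable2}(ii), but it follows at once from your (R3) argument applied symmetrically to $\A$ and $\A^{\op}$, so establish (R3) before treating $e_\rho$.
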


We call this  the \textit{recollement arising from the pair $(\A,\B)$ of a dualizing $k$-variety $\A$ and a functorially finite subcategory $\B$ in $\A$}.

In the rest of this section, we give a proof of Theorem \ref{thm:main}. Let $\A$ and $\B$ be Krull-Schmidt Hom-finite $k$-linear categories.

First we consider the right part of the recollement (\ref{recollement2}).
The existence of the isomorphisms $i^*_\lambda(\B(-,b))=\B(-,b)\otimes_\B(_\B\A_\A)\cong \A(-,b)$ shows that the functor $i^*_\lambda$ preserves indecomposable projectives.
Since $i^*_\lambda$ is right-exact, we have the restricted functor $i^*_\lambda:\mod\B\rightarrow\mod\A$, which is denoted by the same symbol.
However, in general $i^*$ and $i^*_\rho$ can not be restricted onto the subcategories of finitely presented functors.
We will show in Proposition \ref{prop:adj3} that if $(\A, \B)$ is a pair of a dualizing $k$-variety $\A$ and its functorially finite subcategory $\B$, $i^*$ and $i^*_\rho$ can be restricted onto the subcategories.

Second we consider the left part of the recollement (\ref{recollement2}).
Like the case for the canonical inclusion $i$, although the left adjoint $p^*_\lambda$ preserves finitely presented functors, $p^*$ and $p^*_\rho$ do not necessarily preserve finitely presentedness.

The next lemma shows a necessary and sufficient condition so that $i^*$ and $p^*$ preserves finitely presentedness, see \cite[Prop. 3.9]{Buc} for the equivalence (i) and (iii) below.

\begin{lemma}\label{lem:adj1}
Let $\A$ be a category with weak-kernels and $\B$ a full subcategory in $\A$.
Then the following are equivalent.
\begin{itemize}
\item[(i)] The category $\B$ is contravariantly finite.
\item[(ii)] We have the restricted functor $i^*:\mod\A\rightarrow\mod\B$.
\item[(iii)] We have the restricted functor $p^*:\mod(\A/[\B])\rightarrow\mod\A$.
\end{itemize}
Moreover, under the above equivalent conditions, there exist the adjoint pairs
$$
\xymatrix@C=1.2cm{\mod(\A/[\B])\ar[r]|-{{p^*}}
&\mod\A\ar_-{p^*_\lambda}@/_1.2pc/[l]&\textnormal{and}&
\mod\A\ar[r]|-{{i^*}}
&\mod\B .\ar_-{i^*_\lambda}@/_1.2pc/[l]
}
$$
\end{lemma}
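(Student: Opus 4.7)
The plan is to establish $(i) \Leftrightarrow (ii)$ and $(i) \Leftrightarrow (iii)$ separately, and then deduce the adjoint pairs by restriction. The key technical input is the weak-kernel assumption, which via Lemma \ref{lem:weak-kernel} ensures $\mod\A$ is an abelian subcategory of $\Mod\A$; in particular, kernels of morphisms between representable $\A$-modules are automatically finitely generated.

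For $(i)\Leftrightarrow(ii)$: since $i^*\A(-,a) = \A(-,a)|_\B$, the implication $(ii)\Rightarrow(i)$ is immediate because objects of $\mod\B$ are in particular finitely generated. For the converse, I would first show $\A(-,a)|_\B \in \mod\B$ for each $a\in\A$: a right $\B$-approximation $\alpha\colon b\to a$ gives an epimorphism $\B(-,b)\twoheadrightarrow\A(-,a)|_\B$, and to witness a finitely generated kernel I take a weak-kernel $c\to b$ of $\alpha$ in $\A$ together with a right $\B$-approximation $b''\to c$; restricting to $\B$ furnishes the required presentation. Then, for arbitrary $X\in\mod\A$ with presentation $\A(-,a_1)\to\A(-,a_0)\to X\to 0$, restricting to $\B$ exhibits $X|_\B$ as the cokernel of a morphism between finitely presented $\B$-modules, so $X|_\B\in\mod\B$.

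For $(i)\Leftrightarrow(iii)$: the key object is $p^*\A/[\B](-,a)$, which coincides pointwise with the quotient functor $\A(-,a)/[\B](-,a)$ in $\Mod\A$. Under $(i)$, a right $\B$-approximation $\alpha\colon b\to a$ identifies $[\B](-,a)$ with the image of $\A(-,b)\xrightarrow{\alpha\circ -}\A(-,a)$, yielding a presentation of $p^*\A/[\B](-,a)$ in $\mod\A$; a general $X\in\mod(\A/[\B])$ is then handled using the exactness of $p^*$. The harder direction is $(iii)\Rightarrow(i)$, which I expect to be the main obstacle. Assuming $p^*\A/[\B](-,a)\in\mod\A$, the abelianness of $\mod\A$ forces $[\B](-,a) = \Ker(\A(-,a)\to p^*\A/[\B](-,a))$ to be finitely generated, so there is an epimorphism $\A(-,c)\twoheadrightarrow[\B](-,a)$; by Yoneda the image of $1_c\in\A(c,c)$ is a morphism $\delta\colon c\to a$ belonging to $[\B](c,a)$. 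Factor $\delta = \alpha\circ\beta$ with $\beta\colon c\to b$, $\alpha\colon b\to a$, $b\in\B$; I claim $\alpha$ is a right $\B$-approximation of $a$. Indeed, for any $f\colon b'\to a$ with $b'\in\B$ one has $f\in[\B](b',a) = \A(b',a)$, so by surjectivity $f$ lifts to some $g\colon b'\to c$ with $\delta g = f$, whence $f = \alpha\circ(\beta g)$ factors through $\alpha$.

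Finally, for the adjoint statement, one computes that $i^*_\lambda(\B(-,b))\cong\A(-,b)$ and $p^*_\lambda(\A(-,a))\cong\A/[\B](-,a)$; combined with the right-exactness of $i^*_\lambda$ and $p^*_\lambda$ (both admit right adjoints in the ambient module categories), this shows that each sends finitely presented objects to finitely presented ones. The adjunction isomorphisms then restrict from the $\Mod$-level to the $\mod$-level without further work.
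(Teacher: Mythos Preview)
Your argument is correct. For $(i)\Leftrightarrow(ii)$ you follow essentially the paper's route; the only cosmetic difference is that you build the presentation of $\A(-,a)|_\B$ by taking a weak-kernel of the approximation map in $\A$ and then approximating it again, whereas the paper takes the kernel of $\A(-,b_0)\to\A(-,x)$ inside the abelian category $\mod\A$ and uses that it is finitely generated there. Both amount to the same thing.

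Where your proof genuinely adds content is $(i)\Leftrightarrow(iii)$: the paper does not prove this equivalence at all, but simply refers to \cite[Prop.~3.9]{Buc} before the statement. Your argument for $(iii)\Rightarrow(i)$ --- using abelianness of $\mod\A$ to get $[\B](-,a)$ finitely generated, then factoring the Yoneda element through an object of $\B$ to produce an approximation --- is clean and self-contained, and is a nice addition. Likewise, your treatment of the ``Moreover'' clause (checking that $i^*_\lambda$ and $p^*_\lambda$ preserve representables and hence finitely presented objects) makes explicit what the paper leaves to the reader.
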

\begin{proof}
(i) $\Rightarrow$ (ii): Since $i^*$ is exact, we have only to show that $i^*(\A(-,x))$ is finitely presented for any $x\in\A$.
Since $\B$ is contravariantly finite, there exists a right $\B$-approximation $\alpha_0 :b_0\rightarrow x$.
The morphism $\alpha_0$ induces an epimorphism $\B(-,b_0)\xrightarrow{\alpha_0\circ -} \A(-,x)|_\B\rightarrow 0$,
that is, the $\B$-module $\A(-,x)|_\B$ is finitely generated.
Since $\mod\A$ is abelian, we have the kernel-sequence
$0\rightarrow X\rightarrow \A(-,b_0)\xrightarrow{\alpha_0\circ -} \A(-,x)$ in $\mod\A$
induced from the morphism $\alpha_0$.
Since $X\in\mod\A$, there exists an epimorphism $\A(-,x^\prime)\rightarrow X\rightarrow 0$ and thus we have an exact sequence
$$\A(-,x^\prime)|_\B\rightarrow\B(-,b_0)\rightarrow \A(-,x)|_\B\rightarrow 0$$
in $\mod\B$.
The fact that $\A(-,x^\prime)|_\B$ is finitely generated shows that $\A(-,x)|_\B$ is finitely presented.

(ii) $\Rightarrow$ (i): For any $x\in\A$, the functor $i^*(\A(-,x)) = \A(-,x)|_\B$ is finitely presented.
This shows that $\B$ is contravariantly finite by definition.
\end{proof}

If $\A$ is a dualizing $k$-variety and $\B$ is functorially finite
in $\A$,
then the restriction functor $i^*:\mod\A\rightarrow\mod\B$ admits a right adjoint.

\begin{proposition}\label{prop:adj3}
There exist the following adjoint pairs for the pair $(\A,\B)$ of a dualizing $k$-variety $\A$ and its full subcategory $\B$:
$$
\xymatrix@C=1.2cm{\mod\A\ar[r]|-{q}&\mod\B , \ar@/^1.2pc/[l]^{q_\rho}\ar@/_1.2pc/[l]_{q_\lambda}}
$$
where $q:=i^*$ is the restriction functor induced by the canonical inclusion $i:\B\hookrightarrow\A$.
Moreover, we have isomorphisms $q_\rho\cong \Hom_\B(_\A\A_\B,-)$ and $q_\lambda\cong -\otimes_\B(_\B\A_\A)$.
\end{proposition}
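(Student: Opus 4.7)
The plan is to obtain $q_\lambda$ directly from Lemma \ref{lem:adj1} and to construct $q_\rho$ by dualizing the corresponding left adjoint on the opposite side via the $k$-duality $D$. The key observation is that a dualizing $k$-variety admits weak-kernels and weak-cokernels, and that functorial finiteness of $\B$ is a self-dual condition, so everything done for $(\A,\B)$ can be repeated for $(\A^{\op},\B^{\op})$.

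First I apply Lemma \ref{lem:adj1} to the pair $(\A,\B)$: since $\A$ admits weak-kernels and $\B$ is contravariantly finite, $q=i^*:\mod\A\to\mod\B$ is well-defined and admits a left adjoint $q_\lambda$. The explicit description $q_\lambda\cong -\otimes_\B({}_\B\A_\A)$ then follows because the $\Mod$-level left adjoint $i^*_\lambda=-\otimes_\B({}_\B\A_\A)$ of Proposition \ref{prop:recollement2} sends the representable $\B(-,b)$ to $\A(-,b)$ and is right exact, hence restricts to a functor $\mod\B\to\mod\A$ which must agree with $q_\lambda$ by uniqueness of left adjoints.

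For $q_\rho$ I pass to the opposite. By symmetry of the dualizing property and of functorial finiteness, $\A^{\op}$ is a dualizing $k$-variety and $\B^{\op}$ is functorially finite, and in particular contravariantly finite, in $\A^{\op}$. Applying Lemma \ref{lem:adj1} on the opposite side supplies the restriction $j^*:\mod(\A^{\op})\to\mod(\B^{\op})$ together with a left adjoint $L$. A direct unwinding of definitions gives a natural isomorphism $D\circ q\cong j^*\circ D$ of functors $\mod\A\to\mod(\B^{\op})$, and I then set
$$q_\rho:=D\circ L\circ D:\mod\B\to\mod\A.$$
Chaining the adjunction $(L\dashv j^*)$ with the two dualities $D:\mod\A\xrightarrow{\sim}\mod(\A^{\op})$ and $D:\mod\B\xrightarrow{\sim}\mod(\B^{\op})$ provided by Propositions \ref{prop:dv1} and \ref{prop:dv2} yields a natural isomorphism $\mod\A(X,q_\rho Y)\cong\mod\B(qX,Y)$, so $q_\rho$ is right adjoint to $q$.

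Finally, the explicit formula $q_\rho\cong\Hom_\B({}_\A\A_\B,-)$ follows from uniqueness of right adjoints applied at the $\Mod$-level: the functor $\Hom_\B({}_\A\A_\B,-)$ of Proposition \ref{prop:recollement2} is right adjoint to $i^*$, so whenever it lands in $\mod\A$ it must coincide with $q_\rho$, and the construction above shows that it does. The step I expect to require the most care is the natural isomorphism $D\circ q\cong j^*\circ D$; the underlying identity $(DX)(b)=D(X(b))$ is immediate, but one must keep track of the variance of $D$ and of the source/target swap when passing between $\A$-modules (functors on $\A^{\op}$) and $\A^{\op}$-modules (functors on $\A$).
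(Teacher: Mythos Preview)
Your proof is correct and follows essentially the same route as the paper: obtain $(q_\lambda\dashv q)$ from Lemma~\ref{lem:adj1}, then construct $q_\rho$ by transporting the left adjoint on the opposite side through the dualities $D$, using that $\A^{\op}$ is again a dualizing $k$-variety with $\B^{\op}$ functorially finite. The only cosmetic difference is in the identification $q_\rho\cong\Hom_\B({}_\A\A_\B,-)$: the paper computes directly on injective $\B$-modules $D\B(x,-)$ (using that $q_\rho$ is left exact and preserves injectives), whereas you invoke uniqueness of right adjoints against the $\Mod$-level functor $i^*_\rho$; both arguments are short and valid, and your version has the small advantage of avoiding any explicit tensor--Hom manipulation. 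Your closing remark about checking $D\circ q\cong j^*\circ D$ is well placed: the paper records this as ``$q\cong Dq'D$ holds by definition'' without further comment, so your caution about variance is appropriate but does not signal a genuine difficulty.
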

\begin{proof}
Recall that every dualizing $k$-variety admits weak-kernels and weak-cokernels.
As we have seen in Lemma \ref{lem:adj1}, there exists an adjoint pair $(q_\lambda\dashv q)$ between $\mod\A$ and $\mod\B$.
Since $\A^{\op}$ is also a dualizing $k$-variety and $\B^{\op}$ is functorially finite in $\A^{\op}$, by Lemma \ref{lem:adj1}, we have an adjoint pair
$(q^\prime_\lambda\dashv q^\prime)$ between $\mod(\A^{\op})$ and $\mod(\B^{\op})$,
where $q^\prime$ is the restriction functor induced by the inclusion $\B^{\op}\hookrightarrow\A^{\op}$.
Since $\A$ and $\B$ are dualizing $k$-varieties, we have the following functors:
$$
\xymatrix@C=1.2cm{
\mod\A\ar[r]|-{q}\ar@{<->}[d]_D
&\mod\B \ar@/_1.2pc/[l]_{q_\lambda}\ar@{<->}[d]^D\\
\mod(\A^{\op})\ar[r]|-{q^\prime}
&\mod(\B^{\op}). \ar@/_1.2pc/[l]_{q^\prime_\lambda}
}
$$
First we notice that $q\cong Dq^\prime D$ holds by definition.
Put $q_\rho:=Dq^\prime_\lambda D:\mod\B\rightarrow\mod\A$. It is easy to check that $q$ and $q_\rho$ form an adjoint pair $(q\dashv q_\rho)$.

In the remaining part of the proof, we shall verify the latter statement, namely, an isomorphism $q_\rho\cong\Hom_\B(_\A\A_\B,-)$.
This can be verified by the following calculations.
Since $q_\rho$ is left-exact and preserves injective objects, we have only to check the values of $q_\rho$ on injective $\B$-modules.
Due to the duality $D:\mod\B\rightarrow\mod (\B^{\op})$, each injective $\B$-module is isomorphic to $D\B(x,-)$ for some $x\in\B$.
\begin{eqnarray*}
q_\rho(D\B(x,-))&=&Dq^\prime_\lambda D(D\B(x,-))\\
&\cong&D((_\A\A_\B)\otimes_\B\B(x,-))\\
&\cong&\Hom_{\B^{\op}}(\B (x,-),D(_\A\A_\B))\\
&\cong&\Hom_\B(_\A\A_\B,D\B(x,-)).
\end{eqnarray*}
Therefore $q_\rho\cong \Hom_\B(_\A\A_\B,-)$ on $\mod\B$, and hence it is fully faithful.
\end{proof}

By the discussion so far, we constructed the right part of the recollement (\ref{our_recollement}).
Next, we shall construct the left part of (\ref{our_recollement}).
We keep the assumption that $\A$ is a dualizing $k$-variety and $\B$ is functorially finite.
Let us begin with a ``finitely presented version'' of Lemma \ref{lem:stable}.

\begin{lemma}\label{lem:stable2}
The following hold.
\begin{itemize}
\item[(i)] A finitely presented $\A$-module $X$ belongs to $\mod(\A/[\B])$ if and only if $X$ vanishes on objects in $\B$. In particular, we have $\mod (\A/[\B])=\Ker q$.
\item[(ii)] The ideal quotient category $\A/[\B]$ is a dualizing $k$-variety.
\end{itemize}
\end{lemma}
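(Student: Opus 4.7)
The plan is to prove (i) first and then use it as a black box for (ii).

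For the easy direction of (i), any $b\in\B$ satisfies $\mathrm{id}_b\in[\B](b,b)$ (factoring $b\xrightarrow{\mathrm{id}}b\xrightarrow{\mathrm{id}}b$ through $b\in\B$), so $\mathrm{id}_b=0$ in $\A/[\B]$ and hence $X(b)=0$ for every $(\A/[\B])$-module $X$, in particular for those in $\mod(\A/[\B])$ viewed inside $\mod\A$ through the fully faithful $p^*$ of Lemma~\ref{lem:adj1}. For the converse, Lemma~\ref{lem:stable} already provides some $X'\in\Mod(\A/[\B])$ with $p^*X'\cong X$; I would upgrade $X'$ to a finitely presented object by applying the right-exact left adjoint $p^*_\lambda:\mod\A\to\mod(\A/[\B])$ of Lemma~\ref{lem:adj1} to a presentation $\A(-,a_1)\to\A(-,a_0)\to X\to 0$. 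Adjunction and Yoneda give $p^*_\lambda\A(-,a)\cong(\A/[\B])(-,a)$, so the result is a presentation of $p^*_\lambda X$ by representables; on the other hand, full faithfulness of $p^*$ yields $p^*_\lambda X\cong p^*_\lambda p^*X'\cong X'$, so $X'\in\mod(\A/[\B])$. The identification $\mod(\A/[\B])=\Ker q$ inside $\mod\A$ follows at once.

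For (ii), Hom-finiteness and $k$-linearity descend from $\A$ because morphism spaces in $\A/[\B]$ are $k$-linear quotients. For Krull-Schmidt, decompose $x$ in $\A$ as $\bigoplus x_i$ with each $\End_\A(x_i)$ local; then $\End_{\A/[\B]}(x_i)=\End_\A(x_i)/[\B](x_i,x_i)$ is either local (quotient of a local ring by a proper two-sided ideal) or zero, so the surviving summands give the required decomposition. The real content is the dualizing property, which I would deduce from (i) by applying it to both $(\A,\B)$ and $(\A^{\op},\B^{\op})$—the latter being a pair of the same type by Proposition~\ref{prop:dv2} and symmetry—to identify $\mod(\A/[\B])$ with $\Ker(q:\mod\A\to\mod\B)$ and $\mod((\A/[\B])^{\op})$ with $\Ker(q':\mod(\A^{\op})\to\mod(\B^{\op}))$. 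Since the standard duality commutes with the pullbacks $p^*$ and $(p^{\op})^*$, and since $(DX)(b)=D(X(b))=0$ whenever $X(b)=0$, the duality $D:\mod\A\xrightarrow{\sim}\mod(\A^{\op})$ already available for $\A$ restricts to a duality $\mod(\A/[\B])\xrightarrow{\sim}\mod((\A/[\B])^{\op})$, which is what is required.

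The main obstacle is the converse half of (i): Lemma~\ref{lem:stable} only produces $X'$ as an abstract $(\A/[\B])$-module, and extracting a \emph{finite} presentation of it demands explicit use of $p^*_\lambda$ and its behavior on representables, which in turn depends on the contravariant finiteness of $\B$ supplied by Lemma~\ref{lem:adj1}. Once (i) is secured in finitely presented form, (ii) becomes routine bookkeeping against the known duality of $\A$.
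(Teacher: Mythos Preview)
Your proof is correct and follows essentially the same approach as the paper. For (i), both you and the paper obtain $X'\in\Mod(\A/[\B])$ from Lemma~\ref{lem:stable} and then apply $p^*_\lambda$ together with the isomorphism $p^*_\lambda p^*X'\cong X'$ (from full faithfulness of $p^*$) to conclude $X'\in\mod(\A/[\B])$; you simply spell out more explicitly why $p^*_\lambda$ preserves finite presentation. For (ii), both arguments reduce to observing that the standard duality $D$ on $\mod\A$ carries finitely presented modules vanishing on $\B$ to finitely presented $\A^{\op}$-modules vanishing on $\B^{\op}$, and then invoking (i) on each side; your additional verification of Hom-finiteness and the Krull--Schmidt property for $\A/[\B]$ is a detail the paper leaves implicit.
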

\begin{proof}
(i) We only prove the ``if'' part. If $X(b)=0$ for any $b\in\B$, there uniquely exists a functor $X^\prime :\A/[\B]\rightarrow\Mod k$ such that $X\cong p^*X^\prime$.
We have only to show that $X^\prime\in\mod(\A/[\B])$.
Applying $p^*_\lambda$ to the isomorphism $X\cong p^*X^\prime$ yields $p^*_\lambda X\cong p^*_\lambda p^*X^\prime\cong X^\prime$.
Since $p^*_\lambda$ preserves finitely presentedness and $X\in\mod\A$, we conclude $X^\prime\in\mod(\A/[\B])$.

(ii) Let $X\in\mod(\A/[\B])$.
Then $DX$ can be regarded as a finitely presented $\A^{\op}$-module which vanishes on $\B$.
Hence $DX\in\mod (\A/[\B])^{\op}$.
Conversely, we can show that $DX^\prime\in\mod\A/[\B]$ for any $X^\prime\in\mod (\A/[\B])^{\op}$.
\end{proof}

By a similar argument in the proof of Proposition \ref{prop:adj3}, we obtain the following.

\begin{proposition}\label{prop:adj4}
There exist the following adjoint pairs for the pair $(\A,\B)$ of a dualizing $k$-variety $\A$ and its full subcategory $\B$:
$$
\xymatrix@C=1.2cm{\mod(\A/[\B])\ar[r]|-{e}&\mod\A , \ar@/^1.2pc/[l]^{e_\rho}\ar@/_1.2pc/[l]_{e_\lambda}}
$$
where $e:=p^*$ is the restriction functor induced by the canonical projection $p:\A\rightarrow\A/[\B]$.
Moreover, we have isomorphisms $e_\rho\cong \Hom_\A(_{\A/[\B]}(\A/[\B])_\A,-)$
and $e_\lambda\cong -\otimes_\A (_\A(\A/[\B])_{\A/[\B]})$.
\end{proposition}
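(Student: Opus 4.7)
The plan is to imitate the proof of Proposition \ref{prop:adj3} with the inclusion $i:\B\hookrightarrow\A$ replaced by the canonical projection $p:\A\to\A/[\B]$, invoking Lemma \ref{lem:stable2}(ii) so that $\A/[\B]$ is itself a dualizing $k$-variety and the $k$-duality argument still applies. For the left adjoint, since $\A$ admits weak-kernels (being dualizing) and $\B$ is in particular contravariantly finite, Lemma \ref{lem:adj1} immediately supplies the restriction $e:=p^*:\mod(\A/[\B])\to\mod\A$ together with a left adjoint $e_\lambda=p^*_\lambda$. At the level of $\Mod\A$ this left adjoint is the tensor functor $-\otimes_\A({_\A(\A/[\B])_{\A/[\B]}})$ coming from Proposition \ref{prop:recollement2}, and since it sends each representable $\A(-,a)$ to $(\A/[\B])(-,p(a))$, the formula is inherited by the restriction to finitely presented modules.

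For the right adjoint, I would pass to opposites. The pair $(\A^{\op},\B^{\op})$ satisfies the same hypotheses as $(\A,\B)$, and $(\A/[\B])^{\op}=\A^{\op}/[\B^{\op}]$. Applying Lemma \ref{lem:adj1} to this opposite pair produces an adjoint pair $(e'_\lambda\dashv e')$, where $e':\mod((\A/[\B])^{\op})\to\mod\A^{\op}$ is the restriction along the opposite projection $p^{\op}$. By Lemma \ref{lem:stable2}(ii), both $\A$ and $\A/[\B]$ are dualizing $k$-varieties, so the standard $k$-duality provides mutually quasi-inverse dualities $D:\mod\A\simeq\mod\A^{\op}$ and $D:\mod(\A/[\B])\simeq\mod((\A/[\B])^{\op})$. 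A direct unwinding of definitions gives a natural isomorphism $e\cong De'D$, and then setting $e_\rho:=De'_\lambda D$ converts $(e'_\lambda\dashv e')$ into the required adjunction $(e\dashv e_\rho)$ via the standard transport of adjunctions across a duality.

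It remains to identify $e_\rho$ with $\Hom_\A({_{\A/[\B]}(\A/[\B])_\A},-)$. As a right adjoint to the exact functor $e$, the functor $e_\rho$ is left-exact and preserves injectives, so by Proposition \ref{prop:dv1} it suffices to check the isomorphism on the injective cogenerators $D\A(x,-)$ with $x\in\A$. Unwinding $e_\rho=De'_\lambda D$ on such an object and chasing tensor-Hom adjunctions together with the $k$-duality, exactly as in the closing calculation in the proof of Proposition \ref{prop:adj3}, produces the asserted formula.

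The main obstacle I anticipate is the bookkeeping of the one-sided module structures on the bimodule ${_{\A/[\B]}(\A/[\B])_\A}$ and ensuring that the natural identification $e\cong De'D$ uses the correct sides of the bimodule actions; once this is set up carefully, the proof is essentially a transcription of the argument for Proposition \ref{prop:adj3}.
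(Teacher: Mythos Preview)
Your proposal is correct and follows exactly the route the paper intends: the paper's proof consists of the single sentence ``By a similar argument in the proof of Proposition \ref{prop:adj3}, we obtain the following,'' and what you have written is precisely a careful transcription of that argument with $i$ replaced by $p$. Your explicit invocation of Lemma \ref{lem:stable2}(ii) to ensure that $\A/[\B]$ is again a dualizing $k$-variety (so that the duality $D$ is available on $\mod(\A/[\B])$) is the one extra ingredient needed beyond a literal copy of the proof of Proposition \ref{prop:adj3}, and you have identified it correctly.
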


Now we are ready to prove Theorem \ref{thm:main}.

\begin{proof}[proof of Theorem \ref{thm:main}]
By Proposition \ref{prop:adj3} and Proposition \ref{prop:adj4}, we have four adjoint pairs
$(q_\lambda\dashv q), (q\dashv q_\rho), (e_\lambda\dashv e)$ and $(e\dashv e_\rho)$ with $q_\rho, q_\lambda$ and $e$ fully faithful.
By definition, $\Ker q$ is a full subcategory in $\mod\A$ consisiting of functors
which vanihes on $\B$.
Due to Lemma \ref{lem:stable2}, we have $\Ker q=\mod(\A/[\B])$.
Hence they form a recollement.
\end{proof}

Now we apply Theorem \ref{thm:main} to the following special setting.
Let $A$ be a finite dimentional $k$-algebra and $\B$ a functorially finite subcategory of $\mod A$ containing $A$.
Applying Theorem \ref{thm:main} to the pair $(\B,{\proj}A)$ yields the following recollement:
\begin{equation}\label{AB}
\xymatrix@C=1.2cm{\mod\underline{\B}\ar[r]|-{{e}}
&\mod\B\ar[r]|-{q}\ar@/^1.2pc/[l]^-{e_\rho}\ar_-{e_\lambda}@/_1.2pc/[l]
&\mod A,\ar@/^1.2pc/[l]^{q_\rho}\ar@/_1.2pc/[l]_{q_\lambda}}
\end{equation}
where we identify $\mod (\proj A)$ with $\mod A$ via the equivalence $\mod(\proj A)\xrightarrow{\sim}\mod A, X\mapsto X(A)$.
By Proposition \ref{prop:Serre}, this recollement induces the following.

\begin{corollary}\label{cor:AF}
Under the above assumption, there exists an equivalence
$$\frac{\mod\B}{\mod\underline{\B}}\xrightarrow{\sim}\mod A .$$
We call this \textnormal{generalized Auslander's formula}.
\end{corollary}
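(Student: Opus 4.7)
The plan is to deduce the corollary directly from the recollement~(\ref{AB}) together with the Serre-quotient characterization given in Proposition~\ref{prop:Serre}, so the only real work is verifying that Theorem~\ref{thm:main} actually applies to the pair $(\B,{\proj}A)$ and then unwinding the resulting identifications.

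First I would check that the hypotheses of Theorem~\ref{thm:main} are met. Since $A$ is a finite dimensional $k$-algebra, $\mod A$ is a dualizing $k$-variety in the standard way, and $\B$ is functorially finite in $\mod A$ by assumption, so $\B$ is a dualizing $k$-variety by Proposition~\ref{prop:dv2}. Next, since $A\in\B$ we have ${\proj}A=\add A\subseteq\B$, and because $\add A$ has only finitely many indecomposables (up to isomorphism), every object of $\B$ admits both a right and a left ${\proj}A$-approximation by composing projections onto and inclusions from a finite direct sum of copies of the indecomposable summands of $A$; hence ${\proj}A$ is functorially finite in $\B$. Theorem~\ref{thm:main} then produces the recollement
$$\xymatrix@C=1.2cm{\mod(\B/[{\proj}A])\ar[r]|-{e}
&\mod\B\ar[r]|-{q}\ar@/^1.2pc/[l]^-{e_\rho}\ar_-{e_\lambda}@/_1.2pc/[l]
&\mod({\proj}A).\ar@/^1.2pc/[l]^{q_\rho}\ar@/_1.2pc/[l]_{q_\lambda}}$$

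Next I would carry out two identifications. On the left, $\B/[{\proj}A]=\underline{\B}$ by definition of the projectively stable category, so $\mod(\B/[{\proj}A])=\mod\underline{\B}$. On the right, evaluation at the progenerator $A$ gives the standard equivalence $\mod({\proj}A)\xrightarrow{\sim}\mod A,\ X\mapsto X(A)$, under which the restriction functor $q$ becomes the evaluation functor sending a finitely presented $\B$-module $X$ to $X(A)\in\mod A$. This is precisely the recollement~(\ref{AB}) that appears in the statement.

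Finally, I would invoke Proposition~\ref{prop:Serre}: the functor $q$ is exact (being one of the middle functors of a recollement) and admits the fully faithful right adjoint $q_\rho$, so it induces an equivalence $\mod\B/\Ker q\xrightarrow{\sim}\mod A$. By Lemma~\ref{lem:stable2}(i) (or equivalently condition~(R3) of the recollement we just obtained combined with fullness of $e$), $\Ker q=\Im e=\mod\underline{\B}$, which yields the desired equivalence
$$\frac{\mod\B}{\mod\underline{\B}}\xrightarrow{\sim}\mod A.$$
No step presents a genuine obstacle; the only thing one needs to be slightly careful about is the verification that ${\proj}A$ is functorially finite in $\B$, but this is immediate from the finiteness of $\add A$, so the argument is essentially a bookkeeping exercise on top of Theorem~\ref{thm:main}.
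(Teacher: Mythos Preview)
Your proposal is correct and follows exactly the approach the paper takes: apply Theorem~\ref{thm:main} to the pair $(\B,\proj A)$ to obtain the recollement~(\ref{AB}), then invoke Proposition~\ref{prop:Serre} (equivalently, the ``in particular'' clause of Theorem~\ref{thm:main}) together with the identifications $\B/[\proj A]=\underline{\B}$ and $\mod(\proj A)\simeq\mod A$. The paper does not spell out the verification that $\proj A$ is functorially finite in $\B$, but your justification via the finiteness of $\add A$ is the standard one and is consistent with the paper's earlier remark that $\proj\A$ is functorially finite in $\mod\A$ for any dualizing $k$-variety $\A$.
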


By setting $\B =\mod A$, we can recover classical Auslander's formula
$\frac{\mod(\mod A)}{\mod (\psmod A)}\simeq \mod A$
 (see \cite[p. 205]{Aus} and \cite[p. 1]{Len} for definition).
%%%%%%%%%%%%%%%%%%%%%%%%%%%%%%%%%%%%%%%%%%%%%%%%%%%%%%%%%%%%%%%%%%%%%%%%%
\section{Application to the Auslander-Bridger sequences}\label{subsection 3.1}
%%%%%%%%%%%%%%%%%%%%%%%%%%%%%%%%%%%%%%%%%%%%%%%%%%%%%%%%%%%%%%%%%%%%%%%%%
The aim of this section is to show a close relationship between recollements and Auslander-Bridger sequences.
Throughout this section, we fix a dualizing $k$-variety $\A$.
Let $\B$ be a functorially finite subcategory in $\mod\A$ which contains $\proj\A$ and $\inj\A$.

Firstly, we recall the definition of Auslander-Bridger sequence, following \cite[Prop. 2.7]{IJ}.
For the category $\B$,
we denote the $\B$-duality by $(-)^*:=\Hom_\B(-,\B)$.
Note that the $\B$-duality yields a duality $(-)^*:{\proj}\B\xrightarrow{\sim}{\proj}(\B^{\op}),\ \B(-,b)\mapsto \B(b,-)$.
Let $X\in\mod\B$ with a minimal projective presentation $\B(-,b_1)\xrightarrow{\alpha}\B(-,b_0)\rightarrow X\rightarrow 0$ and set $${\Tr}X:=\Cok\alpha^*$$
in $\mod(\B^{\op})$, see \cite{AB}.
We denote by $\epsilon :X\rightarrow X^{**}$ the evaluation map.

\begin{definition-proposition}
For each object $X\in\mod\B$, there exists an exact sequence
$$0\rightarrow \Ext^1_{\B^{\op}}({\Tr}X,\B^{\op})\rightarrow X\xrightarrow{\epsilon}X^{**}\rightarrow \Ext^2_{\B^{\op}}({\Tr}X,\B^{\op})\rightarrow 0,$$
which is called the \textnormal{Auslander-Bridger sequence of $X$}.
\end{definition-proposition}

For the convenience of the reader, we recall from \cite[Prop. 6.3]{Aus} and \cite[Prop. 2.7]{IJ} the construction of the Auslander-Bridger sequence of $X$.
Take a minimal projective presentation $\B(-,b_0)\rightarrow\B(-,b_1)\rightarrow X\rightarrow 0$ of $X$.
Taking a left approximation of a cokernel of $b_0\xrightarrow{\alpha} b_1$
yields an exact sequence $b_0\rightarrow b_1\rightarrow b_2$.
Taking a left approximation of a cokernel of $b_1\xrightarrow{\alpha} b_2$
yields an exact sequence
\begin{equation}\label{ABseq1}
b_0\rightarrow b_1\rightarrow b_2\rightarrow b_3
\end{equation}
in $\mod \A$.
By the construction, the sequence (\ref{ABseq1}) induces the exact sequence
\begin{equation}\label{ABseq2}
\B(b_3,-)\xrightarrow{h}\B(b_2,-)\xrightarrow{g}\B(b_1,-)\xrightarrow{f}\B(b_0,-)
\end{equation}
in $\mod (\B^{\op})$.
Note that $X^*=\Ker f$ and ${\Tr}X=\Cok f$.
Taking the $\B^{\op}$-duality of (\ref{ABseq2}) yields the following sequence
\begin{equation}\label{ABseq3}
0\rightarrow \Hom_{\B^{\op}}({\Tr}X,\B^{\op})\rightarrow \B(-,b_0)\xrightarrow{f^*}\B(-,b_1)\xrightarrow{g^*}\B(-,b_2)\xrightarrow{h^*}\B(-,b_3)
\end{equation}
Note that $\Cok f^*\cong X$ and $\Ker h^*=X^{**}$.
Since (\ref{ABseq3}) is a complex, we have a canonical inculusion $i :\Im g^*\hookrightarrow\Ker h^*$ and a unique canonical epimorphism $\epsilon^\prime: X\twoheadrightarrow \Im g^*$.
It is readily verified that there exists a commutative diagram with exact rows.
$$
\xymatrix{
0\ar[r]&\Im f^*\ar@{^{(}->}[d]\ar[r]&\B(-,b_1)\ar@{=}[d]\ar[r]&X\ar@{->>}[d]^{\epsilon^\prime}\ar[r]&0\\
0\ar[r]&\Ker g^*\ar[r]&\B(-,b_1)\ar[r]&\Im g^*\ar[r]\ar@{^{(}->}[d]^i&0\\
&&&X^{**}&
}$$
The evaluation map $\epsilon$ is obtained as the composition $i\circ\epsilon^\prime$.
By the Snake Lemma, we have $\Ker\epsilon\cong\Ker\epsilon^\prime\cong\Ker g^*/\Im f^*$.
It is easy to verify that $\Cok\epsilon\cong\Cok i =\Ker h^*/\Im g^*$.
Since (\ref{ABseq3}) is the $\B^{\op}$-duality of the projective resolution of ${\Tr}X$, we have the isomorphisms $\Ker g^*/\Im f^*\cong \Ext^1_{\B^{\op}}({\Tr}X,\B^{\op})$ and $\Ker h^*/\Im g^*\cong \Ext^2_{\B^{\op}}({\Tr}X,\B^{\op})$.
We have thus obtained the Auslander-Bridger sequence
$$0\rightarrow \Ext^1_{\B^{\op}}({\Tr}X,\B^{\op})\rightarrow X\xrightarrow{\epsilon}X^{**}\rightarrow \Ext^2_{\B^{\op}}({\Tr}X,\B^{\op})\rightarrow 0.$$

We give an interpretation of the Auslander-Bridger sequences via the recollement appearing below.
Due to Theorem \ref{thm:main}, the pair $(\B, \proj\A)$ induces the following recollement:
\begin{equation}\label{AB2}
\xymatrix@C=1.2cm{\mod\underline{\B}\ar[r]|-{{e}}
&\mod\B\ar[r]|-{q}\ar@/^1.2pc/[l]^-{e_\rho}\ar_-{e_\lambda}@/_1.2pc/[l]
&\mod\A,\ar@/^1.2pc/[l]^{q_\rho}\ar@/_1.2pc/[l]_{q_\lambda}}
\end{equation}
where we identify $\mod (\proj\A)$ with $\mod\A$ via the equivalence $\mod(\proj\A)\xrightarrow{\sim}\mod\A$.

\begin{theorem}\label{thm:AB}
Let $(\mod\underline{\B},\mod\B,\mod\A)$ be a recollement (\ref{AB2}).
Then the right-defining exact sequence
$$0\rightarrow (e e_\rho)X\rightarrow X\rightarrow (q_\rho q)X\rightarrow X^\prime\rightarrow 0$$
of $X\in\mod\B$ is isomorphic to the Auslander-Bridger sequence of $X$.
\end{theorem}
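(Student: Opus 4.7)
The plan is to apply the last assertion of Proposition \ref{prop:defining_exact_sequence}: once we know that the Auslander-Bridger sequence
\[ 0 \to \Ext^1_{\B^{\op}}(\Tr X, \B^{\op}) \to X \to X^{**} \to \Ext^2_{\B^{\op}}(\Tr X, \B^{\op}) \to 0 \]
has outer terms in $\Im e$ and middle term in $\Im q_\rho$, it is automatically isomorphic to the right-defining exact sequence of $X$. By Lemma \ref{lem:stable2}(i), an object lies in $\Im e = \mod(\B/[\proj\A])$ exactly when it vanishes on $\proj\A$.

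The technical heart is the claim that each left $\B$-approximation $c_i \to b_{i+1}$ appearing in the construction (with $c_i := \Cok(b_{i-1} \to b_i) \in \mod\A$ for $i = 1, 2$) is a monomorphism in $\mod\A$. By Proposition \ref{prop:dv1}, $\mod\A$ admits injective hulls; picking a monomorphism $c_i \hookrightarrow I_i$ with $I_i \in \inj\A \subseteq \B$, the universal property of the left $\B$-approximation forces this embedding to factor as $c_i \to b_{i+1} \to I_i$, whence $c_i \to b_{i+1}$ is itself monic.

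Granting this, for any $a \in \A$ set $P := \A(-, a) \in \proj\A$; since $P$ is projective in $\mod\A$, the exact functor $\Hom_\A(P, -)$ combined with the monicity above yields exactness of $b_0(a) \to b_1(a) \to b_2(a) \to b_3(a)$ at its two middle positions. The diagram (\ref{ABseq3}) realises $\Ext^i_{\B^{\op}}(\Tr X, \B^{\op})$ as the cohomology of $\B(-, b_0) \to \B(-, b_1) \to \B(-, b_2) \to \B(-, b_3)$, and Yoneda turns its evaluation at $P$ into precisely the above sequence; hence $\Ext^i_{\B^{\op}}(\Tr X, \B^{\op})(P) = 0$ for $i = 1, 2$, and both outer terms lie in $\Im e$.

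For the middle term, a direct kernel computation using both monicities together with the left exactness of $\Hom_\A(b, -)$ for $b \in \B$ gives
\[ X^{**}(b) = \Ker(\B(b, b_2) \to \B(b, b_3)) = \Hom_\A(b, c_1), \]
where $c_1 = \Cok(b_0 \to b_1)$ corresponds to $qX$ under the equivalence $\mod(\proj\A) \simeq \mod\A$. Combining the adjunction $(q \dashv q_\rho)$ with the Yoneda identification $\B(-, b)|_{\proj\A} \cong b$ yields $q_\rho(Y)(b) = \Hom_\A(b, Y)$ for every $Y \in \mod\A$, whence $X^{**} \cong q_\rho(qX) \in \Im q_\rho$. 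Proposition \ref{prop:defining_exact_sequence} then identifies the Auslander-Bridger sequence with the right-defining exact sequence of $X$. The main obstacle is the monomorphism step, which crucially relies on both $\inj\A \subseteq \B$ and the existence of injective hulls in $\mod\A$; once available, the remaining identifications are formal Yoneda and adjunction manipulations.
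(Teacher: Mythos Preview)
Your proof is correct and follows essentially the same strategy as the paper: apply the uniqueness clause of Proposition~\ref{prop:defining_exact_sequence} by checking that the outer terms of the Auslander--Bridger sequence lie in $\Im e=\Ker q$ (via exactness of $b_0\to b_1\to b_2\to b_3$ evaluated on projectives) and that $X^{**}\in\Im q_\rho$. The only cosmetic differences are that the paper takes the exactness of $b_0\to b_1\to b_2\to b_3$ as part of the construction from \cite{IJ} (whereas you supply the monomorphism argument using $\inj\A\subseteq\B$), and identifies $X^{**}\in\Im q_\rho$ via the characterization $\Im q_\rho=\Omega^2(\mod\B)$ of Lemma~\ref{lem:AB-seq2} rather than by your direct computation $X^{**}\cong q_\rho(qX)$.
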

In the rest, we give a proof of Theorem \ref{thm:AB}.
By Lemma \ref{lem:stable2} (i), $\mod\underline{\B}$ is a full subcategory in $\mod\B$
consisting of objects $X$ which admits a projective presentation
$$\B(-,b_1)\rightarrow\B(-,b_0)\rightarrow X\rightarrow 0$$
with $b_1\rightarrow b_0$ an epimorphism in $\B$.
Proposition \ref{prop:adj3} gives an explicit description of the functor $q_\rho$.

\begin{lemma}\label{lem:AB-seq1}
The functor $q_\rho :\mod\A\rightarrow \mod\B$ sends $x$ to $\Hom_{\A}(\B,x)$.
\end{lemma}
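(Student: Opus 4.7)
The plan is to directly apply the explicit formula for $q_\rho$ given by Proposition \ref{prop:adj3}, specialised to the pair $(\B, \proj\A)$ in place of the generic pair $(\A, \B)$ of that proposition, and then simplify via Yoneda. By that proposition, $q_\rho(x) \cong \Hom_{\proj\A}({}_\B\B_{\proj\A}, x)$, where the bimodule is given by $(b, P) \mapsto \B(P, b)$; evaluating at an object $b \in \B$ therefore yields
$$q_\rho(x)(b) = \Hom_{\proj\A}(\B(-, b)|_{\proj\A}, x),$$
where $x \in \mod\A$ is viewed as a $\proj\A$-module through the standard equivalence $\mod(\proj\A) \simeq \mod\A$ already used in (\ref{AB}).

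It then remains to identify the $\proj\A$-module $\B(-, b)|_{\proj\A}$ with the $\A$-module $b$ itself under the Yoneda-induced equivalence $\mod(\proj\A) \xrightarrow{\sim} \mod\A$: evaluating on the representable $\A(-, a) \in \proj\A$ gives $\B(\A(-, a), b) \cong b(a)$, since $\B$ is a full subcategory of $\mod\A$ containing the representables. As any equivalence of categories is fully faithful, I would conclude
$$q_\rho(x)(b) \cong \Hom_\A(b, x) = \Hom_\A(\B, x)(b),$$
which is the asserted formula.

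There is no real obstacle here; the argument amounts to two bookkeeping applications of Yoneda, the only care being to keep the identification $\mod(\proj\A) \simeq \mod\A$ consistent on both slots of the $\Hom$. Alternatively, one could avoid citing the explicit form of $q_\rho$ and instead combine the adjunction $(q \dashv q_\rho)$ with the Yoneda isomorphism $q_\rho(x)(b) \cong \Hom_\B(\B(-, b), q_\rho(x))$ to obtain the same chain of isomorphisms.
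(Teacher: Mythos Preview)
Your proposal is correct and follows precisely the route the paper intends: the paper itself offers no separate proof of this lemma, merely prefacing it with the sentence ``Proposition~\ref{prop:adj3} gives an explicit description of the functor $q_\rho$,'' and your argument is exactly the unpacking of that description for the pair $(\B,\proj\A)$ together with the Yoneda identification $\mod(\proj\A)\simeq\mod\A$ already invoked in~(\ref{AB2}). The alternative adjunction-and-Yoneda argument you sketch at the end is also valid and equally brief.
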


We define the \textit{2nd syzygy category $\Omega^2(\mod\B)$ of $\mod\B$} to be
the full subcategory of $\mod\B$ consisting of objects $X$ which admits an exact sequence $0\rightarrow X\rightarrow \B(-,b_0)\rightarrow \B(-,b_1)$ for some $b_0,b_1\in\B$.

\begin{lemma}\label{lem:AB-seq2}
We have the equality $\Im q_\rho=\Omega^2(\mod\B)$.
\end{lemma}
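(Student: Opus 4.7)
My plan is to establish both inclusions by a direct Yoneda-type argument, using the hypothesis $\inj\A\subseteq\B$ together with the existence of injective hulls in $\mod\A$ (Proposition \ref{prop:dv1}).

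For the inclusion $\Im q_\rho\subseteq\Omega^2(\mod\B)$, I would pick, for any given $x\in\mod\A$, a partial injective coresolution $0\to x\to i_0\to i_1$ in $\mod\A$ with $i_0,i_1\in\inj\A$; this is possible because $\mod\A$ admits injective hulls. Since $\inj\A\subseteq\B$, both $i_0$ and $i_1$ lie in $\B$, so applying the left-exact functor $\Hom_\A(b,-)$ for each $b\in\B$ and letting $b$ vary produces a left-exact sequence of $\B$-modules
$$0\to\Hom_\A(-,x)|_\B\to\B(-,i_0)\to\B(-,i_1).$$
By Lemma \ref{lem:AB-seq1}, the leftmost term is $q_\rho(x)$, exhibiting $q_\rho(x)\in\Omega^2(\mod\B)$.

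For the reverse inclusion, start with $X\in\Omega^2(\mod\B)$ fitting in an exact sequence $0\to X\to\B(-,b_0)\to\B(-,b_1)$. By the Yoneda lemma, the rightmost map has the form $\B(-,f)$ for a unique morphism $f:b_0\to b_1$ in $\B\subseteq\mod\A$. I would then form $x:=\Ker f$ inside the abelian category $\mod\A$, giving an exact sequence $0\to x\to b_0\xrightarrow{f}b_1$ in $\mod\A$. Applying $\Hom_\A(b,-)$ for $b\in\B$ produces
$$0\to\Hom_\A(b,x)\to\B(b,b_0)\to\B(b,b_1),$$
and comparing this naturally in $b$ with the evaluation of the original sequence at $b$ yields a functorial isomorphism $X\cong\Hom_\A(-,x)|_\B$, which by Lemma \ref{lem:AB-seq1} is $q_\rho(x)$. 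Hence $X\in\Im q_\rho$.

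The argument is essentially formal, so I do not anticipate a serious obstacle. The only place where the hypotheses truly enter is in securing a partial injective coresolution whose terms land inside $\B$, which is precisely the role of the assumption $\inj\A\subseteq\B$; without it, neither direction would go through as cleanly.
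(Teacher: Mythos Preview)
Your argument is correct and matches the paper's proof essentially line for line: for the forward inclusion one takes an injective copresentation of $x$ (landing in $\B$ since $\inj\A\subseteq\B$) and applies the left-exact functor $q_\rho\cong\Hom_\A(\B,-)$, and for the reverse one forms the kernel in $\mod\A$ of the underlying map $b_0\to b_1$ and applies $q_\rho$ again. One small correction to your closing remark: the hypothesis $\inj\A\subseteq\B$ is needed only for the forward inclusion, not for the reverse.
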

\begin{proof}
We show that $\Im q_\rho\subseteq \Omega^2(\mod\B)$. Let $x\in\mod\A$.
Due to $\inj\A\subseteq \B$, there exists an exact sequence $0\rightarrow x\rightarrow b_0\rightarrow b_1$ in $\mod\A$ with $b_0,b_1\in\B$
which is obtained by taking injective copresentation of $x$.
Applying $q_\rho$ to the above exact sequence gives an exact sequence $0\rightarrow q_\rho x\rightarrow q_\rho b_0\rightarrow q_\rho b_1$.
By Lemma \ref{lem:AB-seq1}, $q_\rho(b_i)\cong \B(-,b_i)$ for $i=0,1$.
This implies that $q_\rho x\in\Omega^2(\mod\B)$.

To show the converse, take an object $X\in\Omega^2 (\mod\B)$ with an exact sequence
$$0\rightarrow X\rightarrow \B(-,b_1)\rightarrow \B(-,b_0).$$
Taking the kernel of $b_1\rightarrow b_0$ yields an exact sequence
$0\rightarrow x\rightarrow b_1\rightarrow b_0$
in $\mod\A$. By applying $q_\rho$ to this, we have an exact sequence
$$0\rightarrow q_\rho x\rightarrow\B(-,b_1)\rightarrow \B(-,b_0)$$
in $\mod\B$. Thus we have $X\cong q_\rho x\in\Im q_\rho$.
This finishes the proof.
\end{proof}

Now we are ready to prove Theorem \ref{thm:AB}.

\begin{proof}
Due to Proposition \ref{prop:defining_exact_sequence}, it is enough to show that $X^{**}\in \Im q_\rho$ and $q(\epsilon)$ is an isomorphism.

(i) The fact that $X^{**}\in\Omega^2 (\mod\B)$ follows from the exact sequence.
Actually, we can find the exact sequence $0\rightarrow X^{**}\rightarrow\B(-,b_2)\xrightarrow{h^*}\B(-,b_3)$ in (\ref{ABseq3}).

(ii) Since $q$ is a restriction functor with respect to the subcategory ${\proj}\A$,
we evaluate the sequence (\ref{ABseq3}) on $p\in{\proj}\A$.
This yields an exact sequence
$$\B(p,b_0)\rightarrow \B(p,b_1)\rightarrow \B(p,b_2)\rightarrow \B(p,b_3),$$
since the sequence $b_0\rightarrow b_1\rightarrow b_2\rightarrow b_3$ is exact.
Therefore $\epsilon (p)$ is an isomorphism. Equivalently $q(\epsilon)$ is an isomorphism.
\end{proof}

%%%%%%%%%%%%%%%%%%%%%%%%%%%%%%%%%%%%%%%%%%%%%%%%%%%%%%%%%%%%%%%%%%%%%%%%%
\section{Application to the $n$-Auslander-Reiten duality}
%%%%%%%%%%%%%%%%%%%%%%%%%%%%%%%%%%%%%%%%%%%%%%%%%%%%%%%%%%%%%%%%%%%%%%%%%
Throughout this section let $\A$ be a dualizing $k$-variety and $n$ a positive integer.
We recall the notion of $n$-cluster tilting subcategory in $\mod \A$.
Let $\B$ be a subcategory of $\mod \A$.
For convenience, we define the full subcategories $\B^{\perp_n}$ and $^{\perp_n}\B$ by
\begin{eqnarray*}
\B^{\perp_n}&:=& \{x\in\mod \A\mid i\in\{1,\ldots ,n\} \Ext^i_A(\B,x)=0\},\\
^{\perp_n}\B&:=& \{x\in\mod \A\mid i\in\{1,\ldots ,n\} \Ext^i_A(x,\B)=0\}.
\end{eqnarray*}

\begin{definition}\cite[Def. 2.2]{Iya}
A functorially finite subcategory $\B$ in $\mod \A$ together with $n\in\NN$
is said to be \textit{$n$-cluster-tilting} if the equalities $\B={^{\perp_{n-1}}\B}=\B^{\perp_{n-1}}$ hold.
\end{definition}

Note that $1$-cluster tilting subcategory is nothing other than $\mod \A$.
It is obvious that every $n$-cluster tilting subcategory contains $\proj\A$ and $\inj\A$,
since $\Ext^i_{\A}(\proj\A,-)$ and $\Ext^i_{\A}(-,\inj\A)$ is zero for any $i>0$.
This fact forces each right $\B$-approximation $b\rightarrow x$ of $x$ to be an epimorphism in $\mod \A$, for every $x\in\mod \A$.
Dually each left $\B$-approximation is a monomorphism.

Throughout this section, $\B$ always denotes an $n$-cluster-tilting subcategory in $\mod \A$. We collect some facts for later use.
The following notion is instrumental in this section.

\begin{definition}\cite[Def. 2.4]{Jas}
Let $\B$ be an $n$-cluster-tilting subcategory in $\mod \A$.
A complex $\delta :0\rightarrow b_{n+1}\rightarrow b_n\rightarrow \cdots\rightarrow b_0\rightarrow 0$ in $\B$ is said to be \textit{$n$-exact} if the induced sequences
\begin{eqnarray*}
&&0\rightarrow \B(-,b_{n+1})\rightarrow \B(-,b_n)\rightarrow \cdots\rightarrow \B(-,b_0),\\
&&0\rightarrow \B(b_0,-)\rightarrow\cdots\rightarrow \B(b_n,-)\rightarrow \B(b_{n+1},-)
\end{eqnarray*}
are exact in $\mod\B$ and $\mod(\B^{\op})$, respectively.
\end{definition}

\begin{lemma}\label{lem:cltilt1}\cite[Prop. 3.17]{Jas}
The following hold for an $n$-cluster-tilting subcategory $\B$.
\begin{itemize}
\item[(i)] Each monomorphism $b_{n+1}\rightarrow b_n$ in $\B$ can be embedded in an $n$-exact sequence $\delta :0\rightarrow b_{n+1}\rightarrow b_n\rightarrow \cdots\rightarrow b_0\rightarrow 0$. Moreover, $\delta$ is uniquely determined up to homotopy.
\item[(ii)] Each epimorphism $b_1\rightarrow b_0$ in $\B$ can be embedded in an $n$-exact sequence $\delta :0\rightarrow b_{n+1}\rightarrow \cdots\rightarrow b_1\rightarrow b_0\rightarrow 0$. Moreover, $\delta$ is uniquely determined up to homotopy.
\end{itemize}
\end{lemma}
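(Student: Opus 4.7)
The plan is to prove (ii) by an explicit iterative construction, and deduce (i) by the dual argument applied to $\A^{\op}$ and $\B^{\op}$, which form again a dualizing $k$-variety together with an $n$-cluster-tilting subcategory. Given an epimorphism $b_1\twoheadrightarrow b_0$ in $\B$---which, since $\inj\A\subseteq\B$, is automatically an epimorphism in $\mod\A$---I will set $K_0:=b_0$, $K_1:=\Ker(b_1\to b_0)$ in $\mod\A$, and iteratively choose for each $i\geq 1$ a right $\B$-approximation $\pi_{i+1}\colon b_{i+1}\to K_i$, then set $K_{i+1}:=\Ker\pi_{i+1}$. Because $\inj\A\subseteq\B$, every $\pi_{i+1}$ is an epimorphism in $\mod\A$, so the sequence $0\to K_n\to b_n\to\cdots\to b_1\to b_0\to 0$ is exact in $\mod\A$; with $b_{n+1}:=K_n$, the claim is that $K_n\in\B$ and that this sequence is $n$-exact.

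Both claims rest on a double Ext-vanishing bookkeeping using \emph{both} halves of the $n$-cluster-tilting condition $\B={}^{\perp_{n-1}}\B=\B^{\perp_{n-1}}$. Fix $b,c\in\B$. Applying $\Hom_\A(b,-)$ to the short exact sequences $0\to K_{i+1}\to b_{i+1}\to K_i\to 0$ and using $\Ext^j_\A(b,b_{i+1})=0$ for $1\leq j\leq n-1$ together with the right-approximation surjectivity $\Hom_\A(b,b_{i+1})\twoheadrightarrow\Hom_\A(b,K_i)$, dimension shifting yields $\Ext^j_\A(b,K_n)=0$ for $1\leq j\leq n-1$, whence $K_n\in\B^{\perp_{n-1}}=\B$. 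Dually, applying $\Hom_\A(-,c)$ to the same short exact sequences and using $\Ext^j_\A(b_i,c)=0$ for $1\leq j\leq n-1$, iterated dimension shifting produces an isomorphism $\Ext^1_\A(K_i,c)\cong\Ext^{i+1}_\A(b_0,c)$, which vanishes for $0\leq i\leq n-2$.

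These two vanishing families deliver $n$-exactness. After splicing, the first Yoneda complex $0\to\B(-,b_{n+1})\to\cdots\to\B(-,b_0)$ in $\mod\B$ is exact term-by-term precisely because $b_{i+1}\to K_i$ is a right $\B$-approximation, giving the required surjectivity; the second Yoneda complex $0\to\B(b_0,-)\to\cdots\to\B(b_{n+1},-)$ in $\mod(\B^{\op})$, evaluated at $c\in\B$, is exact at its $i$-th position precisely when $\Ext^1_\A(K_{i-2},c)=0$, which is the second family. For uniqueness up to homotopy I will use the standard comparison argument: given two $n$-exact sequences $\delta,\delta'$ sharing the initial arrow $b_1\to b_0$, I construct a chain map $\varphi\colon\delta\to\delta'$ inductively, starting from the identity at $b_0$ and $b_1$; the obstruction to extending $\varphi_i$ to $\varphi_{i+1}$ lies in $\Ext^1_\A(b_{i+1},K_{i+1}')$, which vanishes by the first Ext-vanishing family, and any two such chain maps differ by a null-homotopy built by the same mechanism.

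The main obstacle is the second Yoneda exactness: the construction by iterated right $\B$-approximations makes the first Yoneda exactness manifest, but the opposite-direction exactness is not an obvious by-product. It requires the second half ${}^{\perp_{n-1}}\B=\B$ of the $n$-cluster-tilting hypothesis, which enters only through the dimension-shifting isomorphism $\Ext^1_\A(K_i,c)\cong\Ext^{i+1}_\A(b_0,c)$ that crucially depends on $b_0,b_1,\ldots,b_i$ all lying in $\B$.
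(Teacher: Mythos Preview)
The paper does not supply its own proof of this lemma; it simply quotes the result from Jasso \cite[Prop.~3.17]{Jas}. Your argument is the standard construction and is essentially correct---it is precisely the route taken in the cited reference: build the sequence by iterated right $\B$-approximations of successive kernels, use $\B=\B^{\perp_{n-1}}$ and dimension shifting to land the last kernel $K_n$ back in $\B$, and use $\B={}^{\perp_{n-1}}\B$ for the covariant Yoneda exactness.

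One point in your uniqueness sketch needs a small clarification. You say the obstruction to extending $\varphi_i$ to $\varphi_{i+1}$ lies in $\Ext^1_\A(b_{i+1},K'_{i+1})$ and that this ``vanishes by the first Ext-vanishing family.'' But that family was proved for the kernels $K_i$ of the sequence $\delta$ you \emph{constructed} by right $\B$-approximations, whereas $\delta'$ is an arbitrary $n$-exact sequence and its kernels $K'_i$ are a~priori different objects. The fix is immediate: exactness of the first Yoneda complex $0\to\B(-,b'_{n+1})\to\cdots\to\B(-,b'_0)$ forces each map $b'_{i+1}\to K'_i$ to be a right $\B$-approximation as well, so the lift of $b_{i+1}\to K'_i$ along $b'_{i+1}\to K'_i$ exists directly from the approximation property (no Ext computation needed). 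The same observation drives the null-homotopy construction for the difference of two comparison maps.
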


As a generalization of Auslander's defect introduced in \cite{Aus} (see also Section IV. 4 in \cite{ARS}), we define the following concepts, which were introduced by Jasso and Kvamme independently.

\begin{definition}\cite{JK}
Let $\delta :0\rightarrow b_{n+1}\rightarrow b_n\rightarrow \cdots\rightarrow b_0\rightarrow 0$ be an $n$-exact sequence in $\B$.
The \textit{contravariant $n$-defect} $\delta^{*n}$ and the \textit{covariant $n$-defect} $\delta_{*n}$ are defined by the exactness of the following sequences:
\begin{eqnarray*}
&&0\rightarrow \B(-,b_{n+1})\rightarrow \B(-,b_n)\rightarrow \cdots\rightarrow \B(-,b_0)\rightarrow \delta^{*n}\rightarrow 0,\\
&&0\rightarrow \B(b_0,-)\rightarrow \cdots\rightarrow \B(b_n,-)\rightarrow \B(b_{n+1},-)\rightarrow \delta_{*n}\rightarrow 0.
\end{eqnarray*}
\end{definition}

We give the following characterization of $n$-defects.

\begin{proposition}\label{prop:defect}
The full subcategory of contravariant $n$-defects equals to $\mod\underline{\B}$ in $\mod\B$. Dually the full subcategory of covariant $n$-defects equals to $\mod(\overline{\B}^{\op})$ in $\mod(\B^{\op})$.
\end{proposition}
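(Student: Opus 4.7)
The plan is to identify $\mod\underline{\B}$ (inside $\mod\B$) with the full subcategory of contravariant $n$-defects, and to deduce the dual statement by symmetry. By Lemma \ref{lem:stable2}, $\mod\underline{\B}$ consists of exactly those finitely presented $\B$-modules that vanish on $\proj\A \subseteq \B$, so the task reduces to matching this vanishing condition with the class of $n$-defects.

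For the containment ``every $\delta^{*n}$ lies in $\mod\underline{\B}$'', the key input is that the underlying complex $0 \to b_{n+1} \to \cdots \to b_0 \to 0$ of an $n$-exact sequence $\delta$ is exact in the ambient abelian category $\mod\A$, a standard consequence of $\B$ being $n$-abelian \cite{Jas}. Then for any $p \in \proj\A$, applying the exact functor $\Hom_\A(p,-) = \B(p,-)$ to $\delta$ yields an exact complex ending in $\B(p,b_0) \to 0$, and comparison with the defining sequence
$$0 \to \B(-,b_{n+1}) \to \cdots \to \B(-,b_0) \to \delta^{*n} \to 0$$
forces $\delta^{*n}(p) = 0$, i.e., $\delta^{*n} \in \mod\underline{\B}$.

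For the reverse containment, I would take $X \in \mod\underline{\B}$ together with a projective presentation $\B(-,b_1) \xrightarrow{f_*} \B(-,b_0) \to X \to 0$ induced by some $f : b_1 \to b_0$ in $\B$. The vanishing $X|_{\proj\A} = 0$ says that $\Hom_\A(p,b_1) \to \Hom_\A(p,b_0)$ is surjective for every $p \in \proj\A$; applying this to any epimorphism $p \twoheadrightarrow b_0$ with $p \in \proj\A$ (existing since $\mod\A$ has enough projectives by Proposition \ref{prop:dv1}) shows that $f$ itself is an epimorphism in $\mod\A$. Lemma \ref{lem:cltilt1}(ii) then extends $f$ to an $n$-exact sequence $\delta : 0 \to b_{n+1} \to \cdots \to b_1 \xrightarrow{f} b_0 \to 0$, whose contravariant $n$-defect is, by construction, $\Cok(f_*) \cong X$.

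The dual assertion about covariant $n$-defects follows by running the same argument in the opposite setting $(\A^{\op},\B^{\op})$: here $\A^{\op}$ is again a dualizing $k$-variety, $\B^{\op}$ is $n$-cluster tilting in $\mod(\A^{\op})$, and the standard duality $D$ interchanges $\inj\A$ with $\proj(\A^{\op})$, giving $\underline{\B^{\op}} = \overline{\B}^{\op}$. The main conceptual step is the very first one, namely the use that the underlying complex of an $n$-exact sequence is exact in $\mod\A$; once this is accepted, both directions reduce to formal manipulations with Yoneda and the characterization of $\mod\underline{\B}$ provided by Lemma \ref{lem:stable2}.
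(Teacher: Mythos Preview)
Your proof is correct and essentially the same as the paper's: both characterize $\mod\underline{\B}$ via vanishing on $\proj\A$ (Lemma~\ref{lem:stable2}), deduce $\delta^{*n}(p)=0$ from the fact that $b_1\to b_0$ is an epimorphism in $\mod\A$ (you invoke the slightly stronger but equally valid fact that the whole complex $\delta$ is exact in $\mod\A$), and use Lemma~\ref{lem:cltilt1}(ii) to embed that epimorphism into an $n$-exact sequence for the reverse containment. The paper simply says ``dually'' for the covariant statement, which amounts to your passage to $(\A^{\op},\B^{\op})$.
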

\begin{proof}
We only prove the former statement.
Consider $X\in\mod\B$ with a projective presentation
$\B(-,b_1)\xrightarrow{\B(-,f_0)}\B(-,b_0)\rightarrow X\rightarrow 0.$
Assume that $X$ belongs to $\mod\underline{\B}$.
Since $X$ vanishes on $p\in\proj\A$, the map $f_0$ is an epimorphism in $\mod \A$.
By Lemma \ref{lem:cltilt1}, the map $f_0$ can be embedded in an $n$-exact sequence $\delta:0\rightarrow b_{n+1}\rightarrow \cdots\rightarrow b_1\rightarrow b_0\rightarrow 0$.
Hence $\delta^{*n}\cong X$.

Conversely, the fact that contravariant $n$-defect $\delta^{*n}$ belongs to $\mod\underline{\B}$ can be confirmed as follows.
Let $\delta:0\rightarrow b_{n+1}\rightarrow\cdots\rightarrow b_1\rightarrow b_0\rightarrow 0$ be the corresponding $n$-exact sequence and $p$ an object in $\proj\A$.
Then we have the right exact sequence
$$\Hom_\A(p,b_1)\rightarrow \Hom_\A(p,b_0)\rightarrow \delta^{*n}(p)\rightarrow 0$$
by definition.
Since $b_1\rightarrow b_0$ is epic in $\mod\A$,
this concludes that $\delta^{*n}(p)=0$. Hence $\delta^{*n}\in\mod\underline{\B}$.
\end{proof}

There exists a duality between $\mod\underline{\B}$ and $\mod(\overline{\B}^{\op})$.
We denote $\mathsf{C}(\B)$ the category of complexes in $\B$.
For convenience, we consider the homotopy category $\mathsf{K}(\B)$ of $\mathsf{C}(\B)$ and
its full subcategory $\mathsf{K}^{n\textnormal{-ex}}(\B)$ consisting of $n$-exact sequences
$\delta :0\rightarrow b_{n+1}\rightarrow\cdots\rightarrow b_1\rightarrow b_0\rightarrow 0$
with the degree of $b_0$ being zero.

\begin{proposition}\label{prop:defect3}
For $n$-exact sequences $\delta :0\rightarrow b_{n+1}\rightarrow\cdots\rightarrow b_1\rightarrow b_0\rightarrow 0$ and $\delta^\prime :0\rightarrow b_{n+1}^\prime\rightarrow\cdots\rightarrow b_1^\prime\rightarrow b_0^\prime\rightarrow 0$,
the following are equivalent.
\begin{itemize}
\item[(i)] The sequence $\delta$ is homotopy equivalent to $\delta^\prime$.
\item[(ii)] There exists an isomorphism $\delta^{*n}\cong \delta^{\prime *n}$.
\item[(iii)] There exists an isomorphism $\delta_{*n}\cong \delta^{\prime}_{*n}$.
\end{itemize}
Moreover, we have a duality $\Phi :\mod(\overline{\B}^{\op})\simeq\mod\underline{\B}$ sending $\delta_{*n}$ to $\delta^{*n}$.
\end{proposition}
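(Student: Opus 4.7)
The plan is to exploit the observation that each $n$-exact sequence $\delta : 0\to b_{n+1}\to\cdots\to b_0\to 0$ in $\B$, via the Yoneda embedding, yields a projective resolution
$$0\to \B(-,b_{n+1})\to\cdots\to \B(-,b_0)\to \delta^{*n}\to 0$$
of $\delta^{*n}$ in $\mod\B$, and dually a projective resolution of $\delta_{*n}$ in $\mod(\B^{\op})$ of length $n+1$. With this in hand, the equivalence of (i), (ii), (iii) amounts to the standard fact that projective resolutions are unique up to chain homotopy, pulled back along the fully faithful Yoneda embedding $\B\hookrightarrow\mod\B$.

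For (i) $\Rightarrow$ (ii) (and symmetrically (i) $\Rightarrow$ (iii)), the functoriality of Yoneda sends a homotopy equivalence in $\mathsf{K}^{n\textnormal{-ex}}(\B)$ to a homotopy equivalence of the associated projective resolutions, which descends to an isomorphism on cokernels. For (ii) $\Rightarrow$ (i), I would apply the comparison theorem for projective resolutions to lift a given isomorphism $\delta^{*n}\cong \delta'^{*n}$ to a homotopy equivalence $\B(-,b_\bullet)\simeq \B(-,b'_\bullet)$ in $\mathsf{K}(\mod \B)$; because $b\mapsto \B(-,b)$ is fully faithful, this pulls back to a homotopy equivalence $\delta\simeq \delta'$ in $\mathsf{K}^{n\textnormal{-ex}}(\B)$. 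The direction (iii) $\Rightarrow$ (i) is completely analogous, using the projective resolutions in $\mod(\B^{\op})$ and the contravariant Yoneda embedding $b\mapsto \B(b,-)$.

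For the duality $\Phi:\mod(\overline{\B}^{\op})\xrightarrow{\sim}\mod\underline{\B}$, I will set $\Phi(\delta_{*n}):=\delta^{*n}$ on objects; this is well-defined up to isomorphism by the equivalence (iii) $\Leftrightarrow$ (ii), and essentially surjective by Proposition \ref{prop:defect} combined with the existence of $n$-exact extensions in Lemma \ref{lem:cltilt1}. On morphisms, a map $\delta_{*n}\to \delta'_{*n}$ lifts uniquely up to homotopy to a chain map of projective resolutions $\B(b'_\bullet,-)\to \B(b_\bullet,-)$ in $\mod(\B^{\op})$; by contravariant Yoneda this corresponds to a chain map $\delta'\to \delta$ in $\mathsf{K}^{n\textnormal{-ex}}(\B)$, and applying $(-)^{*n}$ produces the desired arrow $\delta'^{*n}\to \delta^{*n}$ in $\mod\underline{\B}$. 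This exhibits $\Phi$ as a contravariant functor, with an explicit quasi-inverse $\delta^{*n}\mapsto \delta_{*n}$ constructed dually.

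The main technical point is the comparison of projective resolutions underpinning the direction (ii) $\Rightarrow$ (i); once that is secured, both the remaining implications and the functoriality of $\Phi$ reduce to routine book-keeping through the Yoneda embedding. A minor subtlety worth checking carefully is that when lifting maps between objects of $\mod\underline{\B}$ (resp.\ $\mod(\overline{\B}^{\op})$) to chain maps of the canonical resolutions, the ambiguity up to homotopy is precisely what is killed by passing to $\mathsf{K}^{n\textnormal{-ex}}(\B)$, so no extra choices remain.
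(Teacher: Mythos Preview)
Your proposal is correct and follows essentially the same route as the paper. The paper's proof is terse: it asserts ``by a standard argument'' that one obtains an equivalence $\mod\underline{\B}\xrightarrow{\sim}\mathsf{K}^{n\textnormal{-ex}}(\B)$ sending $\delta^{*n}\mapsto\delta$ and a duality $\mod(\overline{\B}^{\op})\xrightarrow{\sim}\mathsf{K}^{n\textnormal{-ex}}(\B)$ sending $\delta_{*n}\mapsto\delta$, and then defines $\Phi$ as the composite $\mod(\overline{\B}^{\op})\to\mathsf{K}^{n\textnormal{-ex}}(\B)\to\mod\underline{\B}$. Your write-up simply unpacks that ``standard argument'' as the comparison theorem for projective resolutions transported through the fully faithful Yoneda embedding, which is exactly what is meant; the factorisation through $\mathsf{K}^{n\textnormal{-ex}}(\B)$ is implicit in your lifting of morphisms to chain maps and is made explicit in the paper.
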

\begin{proof}
We assume that $\delta$ is homotopy equivalent to $\delta^\prime$,
that is, there exists chain maps $\phi :\delta\rightarrow\delta^\prime$ and $\psi :\delta^\prime\rightarrow\delta$:
$$\xymatrix{
0\ar[r]&b_{n+1}\ar[r]\ar[d]_{\phi_{n+1}}&\cdots\ar[r]&b_1\ar[r]\ar[d]^{\phi_1}&b_0\ar[r]\ar[d]^{\phi_0}&0\\
0\ar[r]&b_{n+1}^\prime\ar[r]\ar[d]_{\psi_{n+1}}&\cdots\ar[r]&b_1^\prime\ar[r]\ar[d]^{\psi_1}&b_0^\prime\ar[r]\ar[d]^{\psi_0}&0\\
0\ar[r]&b_{n+1}\ar[r]&\cdots\ar[r]&b_1\ar[r]&b_0\ar[r]&0
}$$
with $1-\psi\phi$ and $1-\phi\psi$ null-homotopic.
By a standard argument, we have an isomorphism $\delta^{*n}\cong \delta^{\prime *n}$, an equivalence $\mod\underline{\B}\xrightarrow{\sim}\mathsf{K}^{n\textnormal{-ex}}(\B)$
which sends $\delta^{*n}$ to $\delta$, and
a duality $\mod(\overline{\B}^{\op})\xrightarrow{\sim}\mathsf{K}^{n\textnormal{-ex}}(\B)$
which sends $\delta_{*n}$ to $\delta$.
It is obvious that the composed functor
$$\Phi :\mod(\overline{\B}^{\op})\xrightarrow{\sim}\mathsf{K}^{n\textnormal{-ex}}(\B)\xrightarrow{\sim}\mod\underline{\B},
\ \ \ \delta_{*n}\mapsto \delta\mapsto \delta^{*n}$$
 is a duality.
\end{proof}

In the rest we shall construct the $n$-Auslander-Reiten duality from a viewpoint of dualizing $k$-variety.
As we have seen in Lemma \ref{lem:stable2}, the category $\underline{\B}$ is a dualizing $k$-variety and thus we have the duality $D:\mod\underline{\B}\xrightarrow{\sim}\mod (\underline{\B}^{\op})$.
By composing the duality $\Phi$ in Propositon \ref{prop:defect3} with the duality $D$,
we have the following equivalence.

\begin{proposition}\label{prop:AR-translation}
The composed functor $D\circ\Phi:\mod (\overline{\B}^{\op})\rightarrow \mod (\underline{\B}^{\op})$ is an equivalence which indudes the equivalence $\sigma_n:\underline{\B}\xrightarrow{\sim}\overline{\B}$.
Moreover, we have the following commutative diagram up to isomorphisms:
$$\xymatrix{
\mod\underline{\B}\ar[d]_D&\mod(\overline{\B}^{\op})\ar[l]_\Phi\ar[dl]^{-\circ \sigma_n}\\
\mod (\underline{\B}^{\op})&
}$$
\end{proposition}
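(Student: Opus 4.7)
The plan is to observe that $D\circ\Phi$ is a composition of two contravariant dualities, hence a covariant equivalence of abelian $k$-categories, and then to recover $\sigma_n$ by restricting this equivalence to the subcategories of finitely generated projective objects.

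First, the pair $(\B,\proj\A)$ satisfies the hypotheses of Lemma \ref{lem:stable2}(ii): $\B$ is a dualizing $k$-variety by Proposition \ref{prop:dv2} (being functorially finite in $\mod\A$), and $\proj\A$ is a functorially finite subcategory of $\B$. Hence $\underline{\B} = \B/[\proj\A]$ is a dualizing $k$-variety, and the standard duality $D:\mod\underline{\B}\xrightarrow{\sim}\mod(\underline{\B}^{\op})$ is well-defined. Composing with the duality $\Phi$ of Proposition \ref{prop:defect3}, I obtain the covariant equivalence
$$D\circ\Phi:\mod(\overline{\B}^{\op})\xrightarrow{\sim}\mod(\underline{\B}^{\op}).$$

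To extract $\sigma_n$, I use that any $k$-linear equivalence of abelian categories preserves finitely generated projective objects. Since $\underline{\B}$ and $\overline{\B}$ are Krull-Schmidt, Yoneda gives a $k$-linear equivalence $\underline{\B}^{\op}\xrightarrow{\sim}\proj(\mod(\underline{\B}^{\op}))$ sending $b\mapsto \underline{\B}(b,-)$, and analogously for $\overline{\B}$. Restricting $D\circ\Phi$ to projectives and transporting through these Yoneda equivalences produces a $k$-linear equivalence $\overline{\B}^{\op}\xrightarrow{\sim}\underline{\B}^{\op}$; passing to opposites and taking a quasi-inverse yields the asserted functor $\sigma_n:\underline{\B}\xrightarrow{\sim}\overline{\B}$, characterized up to isomorphism by
$$(D\circ\Phi)(\overline{\B}(b,-))\;\cong\;\underline{\B}(\sigma_n^{-1}b,-)\qquad(b\in\overline{\B}),$$
where $\sigma_n^{-1}$ denotes a chosen quasi-inverse of $\sigma_n$.

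For the commutative triangle, note that $-\circ\sigma_n:\mod(\overline{\B}^{\op})\to\mod(\underline{\B}^{\op})$ is restriction along an equivalence, hence is itself an equivalence and in particular exact. On a representable it acts by
$$\overline{\B}(b,-)\circ\sigma_n \;=\; \overline{\B}(b,\sigma_n(-))\;\cong\;\underline{\B}(\sigma_n^{-1}b,-),$$
using the full faithfulness of $\sigma_n$; this matches $(D\circ\Phi)(\overline{\B}(b,-))$ on the nose. Since both $D\circ\Phi$ and $-\circ\sigma_n$ are exact $k$-linear functors and every object of $\mod(\overline{\B}^{\op})$ admits a projective (that is, representable) presentation, a standard diagram chase propagates the natural isomorphism on representables to a natural isomorphism $D\circ\Phi\cong -\circ\sigma_n$ on the whole category. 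The one point that requires care is purely bookkeeping: since both $\Phi$ and $D$ are contravariant, one must track the variances through the Yoneda identifications and the passage to opposites to pin down the direction of $\sigma_n$ and verify naturality of the comparison isomorphism on representables; once these conventions are fixed, the remaining arguments are formal.
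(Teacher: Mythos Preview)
Your proposal is correct and follows essentially the same approach as the paper: both obtain $\sigma_n$ by restricting the equivalence $D\circ\Phi$ to projective objects and transporting through the Yoneda identification $\proj(\mod(\underline{\B}^{\op}))\simeq\underline{\B}^{\op}$. You have simply spelled out in more detail the bookkeeping (variances, naturality, extension from representables via exactness) that the paper leaves implicit in its three-line proof.
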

\begin{proof}
It is clear that $D\circ\Phi$ gives the equivalence from $\mod (\overline{\B}^{\op})$ to $\mod (\underline{\B}^{\op})$.
We restrict this onto their projective objects, that is, ${\proj}(\overline{\B}^{\op})\simeq {\proj}(\underline{\B}^{\op})$.
Thus we have the equivalence $\sigma_n :\underline{\B}\xrightarrow{\sim}\overline{\B}$
which makes the above diagram commutative up to isomorphisms.
\end{proof}

By the dual argument, we have the equivalence $\sigma_n^-:\overline{\B}\rightarrow\underline{\B}$ which makes the following diagram commutative
up to isomorphisms:
$$\xymatrix{
\mod\underline{\B}\ar[rd]_{-\circ\sigma_n^-}&\mod(\overline{\B}^{\op})\ar@{<-}[l]_{\Phi^{-1}}\ar[d]^D\\
&\mod \overline{\B}
}$$

As an immediate consequence of the above diagrams, we have the higher defect formula.
Moreover, as a special case of the higher defect formula we obtain the higher Auslander-Reiten duality by using a modification of Krause's proof of the classical formula (see \cite{Kra}).

\begin{theorem}\label{higher defect formula}There exist the following formulas:
\begin{itemize}
\item[(i)] \textnormal{(Higher defect formula)} functorial isomorphisms $D\delta^{*n}\cong \delta_{*n}\circ\sigma_n$ and $D\delta_{*n}\cong \delta^{*n}\circ\sigma_n^-$.
\item[(ii)] \textnormal{(Higher Auslander-Reiten duality)} bifunctorial isomorphisms
$\underline{\B}(\sigma^-_n y,x)\cong D\Ext^n_\A(x,y)\cong \overline{\B}(y,\sigma_n x)$
in $x, y\in\B$.
\end{itemize}
\end{theorem}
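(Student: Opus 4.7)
Part (i) is almost immediate from Proposition \ref{prop:AR-translation} and its dual. That proposition gives an isomorphism $D \circ \Phi \cong - \circ \sigma_n$ of functors $\mod(\overline{\B}^{\op}) \to \mod(\underline{\B}^{\op})$; evaluating at $\delta_{*n}$ and using $\Phi(\delta_{*n}) \cong \delta^{*n}$ from Proposition \ref{prop:defect3} yields $D\delta^{*n} \cong \delta_{*n} \circ \sigma_n$. The identity $D\delta_{*n} \cong \delta^{*n} \circ \sigma_n^-$ follows from the dual commutative diagram in the same way.

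For part (ii), the plan is to choose $n$-exact sequences cleverly so that their defects simultaneously compute Ext and stable Hom, then compare them via (i). Fix $y \in \B$ and take the injective hull $y \hookrightarrow I(y)$ in $\mod \A$ (which exists by Proposition \ref{prop:dv1}, with $I(y) \in \inj \A \subseteq \B$). Extend this monomorphism via Lemma \ref{lem:cltilt1}(i) to an $n$-exact sequence
\begin{equation*}
\delta \colon 0 \to y \to I(y) \to b_{n-1} \to \cdots \to b_0 \to 0
\end{equation*}
in $\B$. The key claims are the functor identities $\delta^{*n} \cong \Ext^n_\A(-, y)|_\B$ and $\delta_{*n} \cong \overline{\B}(y, -)$. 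The first is obtained by viewing $\delta$ as a long exact sequence in $\mod \A$, breaking it into short exact sequences, applying $\Hom_\A(x, -)$, and chaining isomorphisms via the cluster-tilting vanishing $\Ext^i_\A(\B, \B) = 0$ for $1 \le i \le n-1$ together with $\Ext^i_\A(-, I(y)) = 0$ for $i \ge 1$. The second follows from the universal property of the injective hull: any $y \to z$ factoring through an injective factors through $I(y)$, so $[\inj \A](y, z)$ equals the image of $\B(I(y), z) \to \B(y, z)$, whose cokernel is $\delta_{*n}(z)$.

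Combining these with (i) applied to this $\delta$ then gives
\begin{equation*}
D\Ext^n_\A(x, y) \cong D\delta^{*n}(x) \cong \delta_{*n}(\sigma_n x) \cong \overline{\B}(y, \sigma_n x),
\end{equation*}
which is the right-hand isomorphism of (ii). The left-hand one $\underline{\B}(\sigma_n^- y, x) \cong D\Ext^n_\A(x, y)$ follows from the exactly dual argument: fix $x \in \B$, take the projective cover $P(x) \twoheadrightarrow x$, extend via Lemma \ref{lem:cltilt1}(ii) to an $n$-exact sequence ending in $P(x) \to x$, and verify analogously that its defects satisfy $\delta_{*n} \cong \Ext^n_\A(x, -)|_\B$ and $\delta^{*n} \cong \underline{\B}(-, x)$, before invoking the dual half of (i). Bifunctoriality in $x$ and $y$ will follow from the naturality of each construction used.

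I expect the main hurdle to be the defect calculation itself: confirming that the defects of these specific $n$-exact sequences really coincide with $\Ext^n_\A$ and the stable Hom functors. It requires careful bookkeeping through the long exact sequences of Ext, using $n$-cluster-tilting vanishing to collapse intermediate syzygy terms, and invoking the universal properties of the injective hull and projective cover at the two ends of the sequence.
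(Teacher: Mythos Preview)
Your proposal is correct and follows essentially the same route as the paper: both prove (i) directly from Proposition~\ref{prop:AR-translation} and Proposition~\ref{prop:defect3}, and for (ii) both fix $y$, extend the injective hull $y\hookrightarrow I(y)$ to an $n$-exact sequence $\delta$, identify $\delta^{*n}\cong\Ext^n_\A(-,y)|_\B$ and $\delta_{*n}\cong\overline{\B}(y,-)$, and then apply (i). The only cosmetic difference is that the paper invokes \cite[Lem.~3.5]{Iya3} for the $\Ext$ identification where you sketch the dimension-shifting argument directly.
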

\begin{proof}
(i) It directly follows from the fact that the duality $\Phi :\mod (\overline{\B}^{\op})\rightarrow \mod \underline{\B}$
sends $\delta_{*n}$ to $\delta^{*n}$ (Proposition \ref{prop:defect3}).

(ii) We only prove the second isomorphism.
Fix an object $y\in\B$.
Let $y\hookrightarrow I(y)$ be an injective hull of $y$ in $\mod\A$.
Complete the $n$-exact sequence $\delta :0\rightarrow y\hookrightarrow I(y)\rightarrow b_{n-1}\rightarrow\cdots\rightarrow b_0\rightarrow 0.$
By Proposition \ref{prop:AR-translation}, we have the isomorphisms $D\delta^{*n}\cong D\Phi (\delta_{*n})\cong \delta_{*n}\circ\sigma_n$.
By \cite[Lem. 3.5]{Iya3}, we have the exact sequence
$$0\rightarrow \B(-,y)\rightarrow \B(-,I(y))\rightarrow \B(-,b_{n-1})\rightarrow \cdots\rightarrow \B(-,b_0)\rightarrow \Ext_\A^n(-,y)\rightarrow \Ext_\A^n(-,I(y))$$
on $\B$.
Since $\Ext^n_\A(-,I(y))=0$, we conclude $\delta^{*n}\cong\Ext^n_\A(-,y)$.
Since $y\hookrightarrow I(y)$ is an injective hull, the exact sequence
$$0\rightarrow\B(b_0,-)\rightarrow \cdots\rightarrow \B(b_{n-1},-)\rightarrow \B(I(y),-)\rightarrow \B(y,-)\rightarrow \delta_{*n}\rightarrow 0$$
shows the isomorphism $\delta_{*n}\cong \overline{\B}(y,-)$.
Therefore we obtain the desired isomorphism $D\Ext^n_A(-,y)\cong \overline{\B}(y,\sigma_n (-))$.
\end{proof}

The isomorphisms in Theorem \ref{higher defect formula}(ii) are nothing other than $n$-Auslander-Reiten dualty. In partilular, the functor $\sigma_n$ (resp. $\sigma^n$) coincides with the $n$-Auslander-Reiten translation $\tau_n$ (resp. $\tau^n$).

We recall the notion of the $n$-Auslander-Reiten duality. Let
$$\tau:{\psmod}\A\rightarrow{\ismod}\A\ \ {\rm and}\ \ 
\tau^-:{\ismod}\A\rightarrow{\psmod}\A$$
be the Auslander-Reiten translations.
As a higher version of the Auslander-Reiten translation, the notion of $n$-Auslander-Reiten translation is defined as follows.
We denote the $n$-th syzygy (resp. $n$-th cosyzygy) functor by $\Omega^n:{\psmod}\A\rightarrow{\psmod}\A$ (resp. $\Omega^{-n}:{\ismod}\A\rightarrow{\ismod}\A$).

\begin{definition-theorem}\cite[Thm. 1.4.1]{Iya}
The \textnormal{$n$-Auslander-Reiten translations} are defined to be the functors
\begin{eqnarray*}
\tau_n&:=&\tau\Omega^{n-1}:{\psmod}\A\xrightarrow{\Omega^{n-1}}{\psmod}\A\xrightarrow{\tau}{\ismod}\A,\\
\tau_n^-&:=&\tau^-\Omega^{-(n-1)}:{\ismod}\A\xrightarrow{\Omega^{-(n-1)}}{\ismod}\A\xrightarrow{\tau^-}{\psmod}\A.
\end{eqnarray*}
These functors induce mutually quasi-inverse equivalences
$$\tau_n :\underline{\B}\rightarrow\overline{\B}\ \ \textnormal{and}\ \ 
\tau_n^- :\overline{\B}\rightarrow\underline{\B}.$$
\end{definition-theorem}

We have the following analog of Theorem \ref{higher defect formula}(ii).

\begin{proposition}\cite[Thm. 1.5]{Iya}\label{prop:AR}
There exist bifunctorial isomorphisms
$\underline{\B}(\tau_n^-y,x)\cong D\Ext^n_A(x,y)\cong\overline{\B}(y,\tau_nx)$
in $x,y\in\B$.
\end{proposition}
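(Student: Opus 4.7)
The plan is to reduce Proposition \ref{prop:AR} to Theorem \ref{higher defect formula}(ii) by identifying $\sigma_n$ with $\tau_n=\tau\Omega^{n-1}$ and $\sigma_n^-$ with $\tau_n^-=\tau^-\Omega^{-(n-1)}$ as functors between $\underline{\B}$ and $\overline{\B}$. Once this identification is in place, the bifunctorial isomorphisms of Proposition \ref{prop:AR} follow from those of Theorem \ref{higher defect formula}(ii) by substitution.

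To produce $\sigma_n\cong\tau_n$, I would apply the classical Auslander-Reiten duality $\ismod\A(y,\tau z)\cong D\Ext^1_\A(z,y)$, available in any dualizing $k$-variety. Taking $z=\Omega^{n-1}x$ and invoking the standard dimension-shifting isomorphism $\Ext^1_\A(\Omega^{n-1}x,y)\cong\Ext^n_\A(x,y)$, a consequence of the long exact sequence obtained from a projective resolution of $x$ together with the vanishing of positive Ext against projectives, yields $\ismod\A(y,\tau_n x)\cong D\Ext^n_\A(x,y)$. Restricting to $x,y\in\B$ and using that $\tau_n x\in\B$ for $x\in\B$, a standard consequence of the $n$-cluster tilting hypothesis $\B=\B^{\perp_{n-1}}={}^{\perp_{n-1}}\B$, sharpens this to $\overline{\B}(y,\tau_n x)\cong D\Ext^n_\A(x,y)$.

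Comparing with $\overline{\B}(y,\sigma_n x)\cong D\Ext^n_\A(x,y)$ from Theorem \ref{higher defect formula}(ii) produces a bifunctorial isomorphism $\overline{\B}(-,\sigma_n x)\cong\overline{\B}(-,\tau_n x)$ in $x\in\B$. Since $\overline{\B}$ is a Krull-Schmidt $k$-linear category, Yoneda's lemma supplies a natural isomorphism $\sigma_n\cong\tau_n$ as functors $\underline{\B}\to\overline{\B}$. The dual argument, interchanging syzygies with cosyzygies and $\tau$ with $\tau^-$, gives $\sigma_n^-\cong\tau_n^-$, completing the reduction.

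The main obstacle is the careful bookkeeping of the various stable categories together with the verification that $\tau_n$ sends $\B$ into itself (modulo injectives). This uses closure of $\B$ under $(n-1)$-th syzygies in $\mod\A$, which follows from the $n$-cluster tilting axioms combined with Lemma \ref{lem:cltilt1}, and the fact that $\tau$ maps projective-free modules to injective-free ones. The homological dimension-shifting is routine; the real content is the Yoneda comparison inside $\overline{\B}$, which relies crucially on both sides being visible as representable functors on $\overline{\B}^{\op}$.
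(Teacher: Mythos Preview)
The paper does not prove Proposition \ref{prop:AR}; it is quoted from \cite[Thm.~1.5]{Iya} as an established result, and is then combined with Theorem \ref{higher defect formula}(ii) to deduce Theorem \ref{thm:AR-translation} (the identification $\sigma_n\cong\tau_n$). Your proposal runs this logic in the opposite direction: you aim to prove Proposition \ref{prop:AR} by first establishing $\sigma_n\cong\tau_n$ and then substituting into Theorem \ref{higher defect formula}(ii). But observe that in order to obtain $\sigma_n\cong\tau_n$, your second paragraph already derives $\overline{\B}(y,\tau_nx)\cong D\Ext^n_\A(x,y)$ directly via classical AR duality plus dimension shifting, and this is exactly the statement of Proposition \ref{prop:AR}. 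The subsequent Yoneda comparison with $\sigma_n$ is therefore superfluous for the stated goal: it proves Theorem \ref{thm:AR-translation}, not Proposition \ref{prop:AR}. The genuine argument is entirely contained in your second paragraph, and the ``reduction'' framing is circular.

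That direct argument in your second paragraph is essentially correct and is the standard route (indeed it is Iyama's). One point needs correction, however: your justification that $\tau_n$ sends $\B$ into $\B$, namely ``closure of $\B$ under $(n-1)$-th syzygies in $\mod\A$'', is false in general; for $x\in\B$ the intermediate object $\Omega^{n-1}x$ need not lie in $\B$. The correct argument, recorded in the paper as the Definition--Theorem immediately preceding Proposition \ref{prop:AR}, checks $\Ext^i_\A(\B,\tau_nx)=0$ for $1\le i\le n-1$ using AR duality together with the $n$-cluster tilting condition $\B={}^{\perp_{n-1}}\B$, without any claim about the intermediate syzygy.
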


Combining results above, we obtain the following explicit form of $\sigma_n$ and $\sigma_n^-$.

\begin{theorem}\label{thm:AR-translation}
The functor $\sigma_n$ and $\sigma_n^-$ are isomorphic to the $n$-Auslander-Reiten translations $\tau_n$ and $\tau_n^-$, repectively.
\end{theorem}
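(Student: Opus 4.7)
The plan is to deduce both $\sigma_n \cong \tau_n$ and $\sigma_n^- \cong \tau_n^-$ as a direct consequence of the Yoneda lemma, by comparing two representations of the same bifunctor $D\Ext^n_\A(-,-)$: Theorem \ref{higher defect formula}(ii) represents it via $\sigma_n$ and $\sigma_n^-$, while Proposition \ref{prop:AR} represents it via the $n$-Auslander-Reiten translations $\tau_n$ and $\tau_n^-$. Since both sets of isomorphisms are already available, the proof should be essentially formal.

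Concretely, the first step is to compose the bifunctorial isomorphisms
$$\overline{\B}(y,\sigma_n x)\ \cong\ D\Ext^n_\A(x,y)\ \cong\ \overline{\B}(y,\tau_n x)$$
from Theorem \ref{higher defect formula}(ii) and Proposition \ref{prop:AR} to obtain a bifunctorial isomorphism $\overline{\B}(y,\sigma_n x)\cong\overline{\B}(y,\tau_n x)$ in $x\in\underline{\B}$ and $y\in\overline{\B}$. One should briefly note that both representables genuinely descend to the quotients: $D\Ext^n_\A(x,y)$ vanishes for $x\in\proj\A$ and for $y\in\inj\A$ (since $n\ge 1$), so the isomorphisms factor through $\underline{\B}\times\overline{\B}$.

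Fixing $x\in\underline{\B}$, the resulting natural isomorphism of representable functors $\overline{\B}(-,\sigma_n x)\cong\overline{\B}(-,\tau_n x)$ on $\overline{\B}^{\op}$ is, by the Yoneda lemma, induced by a unique isomorphism $\sigma_n x\xrightarrow{\sim}\tau_n x$ in $\overline{\B}$, and bifunctoriality in $x$ assembles these into a natural isomorphism $\sigma_n\cong\tau_n\colon\underline{\B}\to\overline{\B}$. Applying the same argument to the pair
$$\underline{\B}(\sigma_n^- y,x)\ \cong\ D\Ext^n_\A(x,y)\ \cong\ \underline{\B}(\tau_n^- y,x)$$
yields $\sigma_n^-\cong\tau_n^-$. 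No serious obstacle is expected; the entire content of the theorem is packaged into the two preceding duality formulas, and the remaining step is a standard Yoneda-lemma identification.
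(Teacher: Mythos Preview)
Your proposal is correct and follows essentially the same approach as the paper: compose the bifunctorial isomorphisms from Theorem \ref{higher defect formula}(ii) and Proposition \ref{prop:AR} to obtain $\overline{\B}(y,\sigma_n x)\cong\overline{\B}(y,\tau_n x)$, then apply the Yoneda lemma. The paper's proof is terser (it omits the dual case and the remark about descent to the quotients), but the argument is identical.
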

\begin{proof}
Theorem \ref{higher defect formula} and Theorem \ref{prop:AR} gives an isomorphism
$\overline{\B}(y,\sigma_nx)\cong\overline{\B}(y,\tau_nx)$.
By Yoneda Lemma, we have an isomorphism $\sigma_n\cong\tau_n$.
\end{proof}

Note that Theorem $\ref{higher defect formula}$ is independently obtained by Jasso and Kvamme in \cite[Theorem 3.7, Corollary 3.8]{JK}.
The proof is different since we proved the higher defect formula in Theorem \ref{higher defect formula} without using the explicit form of $\tau_n$.

%%%%%%%%%%%%%%%%%%%%%%%%%%%%%%%%%%%%%%%%%%%%%%%%%%%%%%%%%%%%%%%%%%%%%%%%%


\begin{thebibliography}{99}
%%%%%%%%%%%%%%%%%%%%%%%%%%%%%%%%%%%%%%%%%%%%%%%%%%%%%%%%%%%%%%%%%%%%%%%%%
\bibitem[Aus1]{Aus}
M.~Auslander, \emph{Coherent functors}.
In Proc. Conf. Categorical Algebra (La Jolla, Calif., 1965), pages 189–231. Springer, New York, 1966.

\bibitem[AB]{AB}
M.~Auslander, M~Bridger, \emph{Stable module theory}.
Memoirs of the American
Mathematical Society, No. 94. American Mathematical Society, Providence, R.I., 1969.

\bibitem[AR1]{AR}
M.~Auslander, I.~Reiten, \emph{Stable equivalence of dualizing R-varieties}.
Adv. in Math. 12 (1974) No. 3, 306-366.

\bibitem[ARS]{ARS}
M.~Auslander, I.~Reiten, S.~O.~Smal\o, \emph{Representation Theory of Artin Algebras}.
Cambridge Studies in AdvancedMathematics 36, Cambridge University Press,
Cambridge, 1997.

\bibitem[ASS]{ASS}
M.~Auslander, D.~Simson, A.~Skowr\'onski, \emph{Elements of the Representation Theory of Associative Algebras. Vol. 1}.
London Mathematical Society Student Texts 65, Cambridge university press (2006).

\bibitem[AS]{AS}
M.~Auslander, S.~O.~Smalo, \emph{Almost split sequences in subcategories}.
J. Algebra, 69(2):426–454, April 1981.

\bibitem[BBD]{BBD}
A. Beilinson, J. Bernstein, P. Deligne, \emph{Faisceaux Pervers (Perverse sheaves)}.
Analysis and Topology on Singular Spaces, I, Luminy, 1981, Asterisque 100 (1982) 5–171 (in French).

\bibitem[Buc]{Buc}
R-O. Buchweitz, \emph{Finite representation type and periodic Hochschild (co-)homology}.
Trends in the representation theory of finite-dimensional algebras (Seattle, WA, 1997), 81--109, Contemp. Math., 229, Amer. Math. Soc., Providence, RI, 1998.

\bibitem[CS]{CS}
W.~Crawley-Boevey, J. Sauter, \emph{On quiver Grassmannians and orbit closures for representation-finite algebras}.
Math. Z. 285 (2017), no. 1-2, 367--395.

\bibitem[Eir]{Eir}
\"O.~Eir\'\i ksson, \emph{From Submodule Categories to the Stable Auslander Algebra}.
arXiv:1607.08504v3, July 2016.

\bibitem[FP]{FP}
V.~Franjou, T.~Pirashvili, \emph{Comparison of abelian categories recollements}.
Doc. Math. 9 (2004), 41--56 (electronic).

\bibitem[Gab]{Gab}
P.~Gabriel, \emph{Des cat\'egories ab\'eliennes}.
Bull. Soc. Math. France 90 (1962), 323-448.

\bibitem[GZ]{GZ}
P.~Gabriel, M.~Zisman, \emph{Calculus of fractions and homotopy theory}.
Springer Ergebnisse35 (1967).

\bibitem[Iya1]{Iya2}
O.~Iyama, \emph{Auslander correspondence}.
Adv. Math., 210(1):51–82, March 2007.

\bibitem[Iya2]{Iya}
O.~Iyama, \emph{Higher-dimensional Auslander-Reiten theory on maximal orthogonal
subcategories}.
Adv. Math., 210(1):22–50, March 2007.

\bibitem[Iya3]{Iya3}
O.~Iyama, \emph{Cluster tilting for higher Auslander algebras}.
Adv. Math., 226(1):1–61, January 2011.

\bibitem[IJ]{IJ}
O.~Iyama, G.~Jasso, \emph{Auslander correspondence for dualizing $R$-varieties}.
arXiv:1602.00127, March 2016.

\bibitem[Jas]{Jas}
G.~Jasso, \emph{$n$-abelian and $n$-exact categories}.
Math. Z., 283(3-4):703–759, 2016.

\bibitem[JK]{JK}
G.~Jasso, S.~Kvamme, \emph{An introduction to higher Auslander-Reiten theory}.
arXiv:1610.05458v1, October 2016.

\bibitem[Kra1]{Kra}
H.~Krause, \emph{A short proof for Auslander's defect formula}.
Linear Algebra Appl., 365:267–270, 2003. Special issue on linear algebra methods in representation theory.

\bibitem[Kra2]{Kra2}
H.~Krause, \emph{Highest weight categories and recollements}.
arXiv:1506.01485v2, June 2015.

\bibitem[Len]{Len}
H.~Lenzing, \emph{Auslander's work on Artin algebras}.
in Algebras and modules, I (Trondheim,
1996), 83–105, CMS Conf. Proc., 23, Amer. Math. Soc., Providence, RI, 1998.

\bibitem[Pop]{Pop}
N.~Popescu, \emph{Abelian categories with applications to rings and modules}.
London Mathematical Society Monographs, No. 3. Academic Press, London-New York, 1973.

\bibitem[Psa]{Psa}
C.~Psaroudakis, \emph{Homological theory of recollements of abelian categories}.
J. Algebra 398 (2014), 63--110.

\bibitem[PV]{PV}
C.~Psaroudakis, J.~Vit\'oria, \emph{Recollements of module categories}.
Appl. Categ. Structures 22 (2014), no. 4, 579--593.
\end{thebibliography}
\end{document}